\newtheorem{Lemma}{Lemma}[section]
\newtheorem{Proposition}[Lemma]{Proposition}
\newtheorem{Theorem}[Lemma]{Theorem}
\newtheorem{Conjecture}{Conjecture}
\theoremstyle{definition}
\newtheorem{Definition}[Lemma]{Definition}
\newtheorem{Example}[Lemma]{Example}
\theoremstyle{remark}
\newcommand{\Pic}{\mathrm{Pic}}
\newcommand{\mgnbar}{\overline{M}_{g,n}}
\newcommand{\monbar}{\overline{M}_{0,n}}
\newcommand{\BB}{\mathscr{B}}
\newcommand{\QQ}{\mathbb{Q}}
\newcommand{\QQp}{\mathbb{Q}_{\geqslant 0}}
\newcommand{\ZZ}{\mathbb{Z}}
\newcommand{\bA}{\boldsymbol{a}}
\newcommand{\bC}{\boldsymbol{c}}
\newcommand{\bW}{\boldsymbol{w}}
\newcommand{\bE}{\boldsymbol{e}}
\newcommand{\bV}{\boldsymbol{v}}
\newcommand{\bP}{\boldsymbol{p}}
\renewcommand{\geq}{\geqslant}
\renewcommand{\le}{\leqslant}
\renewcommand{\ge}{\geqslant}
\title{Towards Fulton's conjecture}
\author{Claudio Fontanari, Riccardo Ghiloni, Paolo Lella}
\date{}
\begin{document}
\maketitle

\abstract{We present an alternate proof, much quicker and more straightforward than the original one, 
of a celebrated Fulton's conjecture on the ample cone of the moduli space 
$\monbar$ of stable rational curves with $n$ marked points in the case $n=7$.}

\footnotetext{Keywords: \parbox[t]{10cm}{moduli space, stable rational curve, polyhedral cone, ample cone, F-conjecture.}}
\footnotetext{MSC: 14H10, 14E30, 52-04, 52B55.}

\section{Introduction} 

A quote traditionally attributed to Lipman Bers says that \emph{God created the natural numbers and compact Riemann surfaces, while the rest of mathematics in man made}. Indeed, compact Riemann surfaces (or, equivalently, smooth projective complex algebraic curves) turn out to be a very natural and fundamental object in mathematics. Since the pioneering work of Riemann, it is known that a compact Riemann surface with $g$ holes (namely, of \emph{genus} $g$) carries a complex structure depending on $3g-3$ parameters (or \emph{moduli}). More precisely, there is a complex 
algebraic variety $M_g$ of dimension $3g-3$ parameterizing smooth curves of genus $g$. Of course $M_g$ cannot be compact, since smooth 
curves degenerate to singular ones. There is however a canonical compactification $\overline{M}_g$ of $M_g$, the 
so-called \emph{Deligne-Mumford compactification}, parameterizing only mildly singular curves, the so-called \emph{stable} curves, with at most ordinary nodes as singularities and finite automorphism group. In order to provide an efficient 
description of the codimension one boundary $\partial \overline{M}_g = \overline{M}_g \setminus M_g$, it is useful 
to introduce moduli spaces of pointed curves $\mgnbar$, parameterizing the data $(C; p_1, \ldots, p_n)$, where 
$p_1, \ldots, p_n$ are distinct smooth points on the nodal curve $C$  and there are only finitely many automorphisms 
of $C$ fixing $p_1, \ldots, p_n$. From this definition it follows that $\overline{M}_g = \overline{M}_{g,0}$ and that 
the boundary components of $\mgnbar$ are images of natural gluing maps defined either on $\overline{M}_{g-1,n+2}$ 
or on $\overline{M}_{g_1,n_1+1} \times \overline{M}_{g_2,n_2+1}$, with $g_1+g_2=g$ and 
$n_1+n_2=n$. 

The moduli space $\mgnbar$ is an irreducible complex projective variety of dimension $3g-3+n$ and the problem of 
classifying compact Riemann surfaces translates into the study of the projective geometry of $\mgnbar$. From this 
point of view, one of the basic questions to be addressed is the description of the ample cone of $\mgnbar$, 
since (suitable multiples of) ample divisors on a projective variety define all its projective embeddings. In the case 
of $\mgnbar$ there is an explicit conjectural characterization of the ample cone, usually referred to as 
\emph{Fulton's conjecture} (see \cite{GKM:02}, Conjecture (0.2)). The main result in \cite{GKM:02} is the 
so-called \emph{Bridge Theorem} (0.3), stating that Fulton's conjecture holds for $\mgnbar$ for every $g \ge 0$ 
and for every $n \ge 0$ if and only if it holds for $\monbar$ for every $n \ge 3$. This is indeed quite powerful 
and rather surprising: in order to understand the geometry of $\mgnbar$ in arbitrary genus $g \ge 0$ it is 
sufficient to address the first case $g=0$, where $\monbar$ is a smooth algebraic variety birational to a
projective space of dimension $n-3$. Unluckily, Fulton's conjecture turns out to be terribly hard even for $g=0$. 
The best results available today go back to 1996, when the case $n \le 7$ was checked in \cite{KMK:96}, 
Theorem 1.2(3), by exploiting the fact that $\overline{M}_{0,7}$ is \emph{nearly log Fano}, in the sense that 
its anticanonical divisor is effective. 

A few years later, a more combinatorial approach was proposed in \cite{GKM:02}. Namely, let $\Delta_S$ with 
$S \subset \{1, \ldots, n \}$, $2 \le \vert S \vert \le n-2$, denote the boundary component of $\monbar$ 
whose general point parameterizes the union of two rational curves $C_1 \cup C_2$ together with $n$ 
points $(p_1, \ldots, p_n)$ such that $p_i \in C_1$ if and only if $i \in S$. By definition, we have 
$\Delta_S = \Delta_{S^c}$. According to \cite{Keel:92}, (2) on p. 550, the Picard group $\Pic(\monbar)$ 
of divisors modulo linear equivalence is freely generated by the boundary divisors $\Delta_S$ modulo 
the following set of relations: 
\begin{equation}\label{eq:Vn}
\mathscr{V}_n := \left\{ \sum_{ \begin{subarray}{c} a,b \in S\\ c,d \notin S \end{subarray}} \Delta_S - 
\sum_{ \begin{subarray}{c}a,c \in S \\ b,d \notin S\end{subarray}} \Delta_S\ \Bigg\vert\begin{array}{c} a,b,c,d \in \{1,\ldots,n\}\\ a,b,c,d \text{ distinct}\end{array}\right\}. 
\end{equation}
Let $\delta_S$ denote the equivalence class of $\Delta_S$ modulo $\langle \mathscr{V}_n\rangle$. The content of \cite{GKM:02}, 
Question (0.13), is the following
\begin{Conjecture}\label{main}
Let $D = \sum_{S} a_S \delta_S$ be a divisor on $\monbar$ such that
\[
a_{I \cup J} + a_{I \cup K} + a_{I \cup L} \ge a_I + a_J + a_K + a_L
\]
for every partition $\{1, 2, \ldots, n \} = I \cup J \cup K \cup L$ into $4$ disjoint and nonempty subsets. 
Then $D = \sum_{S} b_S \delta_S$ for suitable $b_S \ge 0$.
\end{Conjecture}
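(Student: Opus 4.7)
My plan is to try to prove the conjecture by establishing the equality of two polyhedral cones in $N^1(\monbar)_\QQ := \Pic(\monbar)\otimes\QQ$: the \emph{F-cone} $\mathscr{F}_n$ cut out by the F-inequalities in the statement, and the \emph{nonnegative boundary cone} $\mathscr{B}_n$, defined as the image of the nonnegative orthant under the map $(a_S)\mapsto \sum_S a_S\delta_S$. Because each $\delta_S$ is itself nef on $\monbar$, the F-inequalities are satisfied by every $\delta_S$, so $\mathscr{B}_n\subseteq \mathscr{F}_n$ is immediate. The content of Conjecture \ref{main} is the reverse inclusion $\mathscr{F}_n\subseteq \mathscr{B}_n$.

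To establish this reverse inclusion, I would fix $[D]=\sum a_S\delta_S\in\mathscr{F}_n$ and attempt a two-stage reduction. First, using the Keel relations $\mathscr{V}_n$ in \eqref{eq:Vn}, I would bring the coefficients $a_S$ into a normalized form (for instance, one in which certain low-cardinality boundary coefficients vanish). Second, I would peel off nonnegative multiples of extremal boundary divisors one at a time, checking after each subtraction that the remainder still lies in $\mathscr{F}_n$. The natural engine for an induction on $n$ is restriction to the boundary: each divisor $\Delta_S$ is isomorphic to $\overline{M}_{0,|S|+1}\times \overline{M}_{0,n-|S|+1}$, and the restriction of a divisor satisfying the F-inequalities on $\monbar$ again satisfies the F-inequalities on each factor. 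By induction on $n$ these restrictions admit nonnegative decompositions, and the hope would then be to patch those decompositions together and lift them to a global nonnegative expression of $D$ on $\monbar$, with the F-inequalities controlling the leftover term supported on the interior.

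The main obstacle, and precisely the reason Conjecture \ref{main} remains open for $n\ge 8$, is that the Keel relations $\mathscr{V}_n$ mix boundary classes $\delta_S$ of different combinatorial types in a way that is not aligned with any single stratification of $\monbar$: subtracting a nonnegative multiple of $\delta_S$ and then reducing modulo $\mathscr{V}_n$ can raise the coefficient of an unrelated $\delta_{S'}$, so the greedy peeling procedure need not terminate with nonnegative output, and the patching of boundary decompositions into a global one lacks uniform control. Verifying the inclusion $\mathscr{F}_n\subseteq \mathscr{B}_n$ by pure polyhedral computation, on the other hand, requires enumerating the extremal rays of $\mathscr{F}_n$, whose count grows explosively with $n$; this is exactly why $n=7$ (the case treated in the present paper) has remained the frontier for many years. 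A scalable, uniform combinatorial argument --- for instance, an explicit recipe producing the corrective Keel-relation to apply at each step of the induction --- rather than the case-by-case polyhedral verifications that suffice for small $n$, is what would be needed to settle the conjecture in full generality.
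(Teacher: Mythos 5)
Your proposal does not contain a proof: after introducing the two cones you describe a reduction strategy (normalize via the Keel relations, peel off boundary classes, restrict to boundary strata and induct on $n$) and then explain yourself why none of its steps can be controlled, so nothing is actually established. Beyond the incompleteness there are two concrete errors. First, the inclusion $\mathscr{B}_n\subseteq\mathscr{F}_n$ that you call ``immediate'' is false: boundary divisors are not F-nef. For $D=\delta_I$ and any partition $I\cup J\cup K\cup L$ the F-inequality reads $0\ge 1$, because none of $I\cup J$, $I\cup K$, $I\cup L$ can equal $I$ or $I^c$ while $a_I=1$ and $a_J=a_K=a_L=0$; equivalently $\delta_I\cdot C_{I,J,K,L}=-1$. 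So the conjecture asserts a single inclusion (every F-nef class is an effective combination of boundary classes), not an equality of cones, and your opening paragraph misstates what is to be proved. Second, the statement you aim to prove for all $n$ is known to be false: Pixton produced a counterexample on $\overline{M}_{0,12}$ (cited as \cite{Pixton:13} in the paper), so the conjecture is not merely ``open for $n\ge 8$'', and the uniform combinatorial argument you call for at the end cannot exist.

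What the paper actually proves is the case $n=7$ (and $n=6$), by a finite convex-geometry verification rather than an induction on $n$: the conjecture is restated as the containment of the F-nef cone $\mathcal{F}_n$, known by its H-representation, in the cone $\mathcal{E}_n=\mathcal{O}_n+\boldsymbol{V}_n$, known by its V-representation; one passes to the quotient $\Pic(\monbar)_{\QQ}$ by the Keel relations using the basis $\mathscr{B}_7$ of \eqref{eq:baseFontanari}, and then enlarges the positive orthant of that basis by adjoining, one at a time, the five classes $\delta_{1,2,3}$, $\delta_{4,6,7}$, $\delta_{1,3,5}$, $\delta_{2,4,6}$, $\delta_{3,5,7}$, certifying at each step by linear programming how many facet inequalities of the current cone are still violated somewhere on $\overline{\mathcal{F}_7}$, until none remain. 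The sound kernel of your proposal --- that the hypothesis of the conjecture is invariant under the relations $\mathscr{V}_n$ and the conclusion under translation by $\boldsymbol{V}_n$ --- is exactly the paper's lemma \eqref{eq:twoWays}, but that only sets up the problem; the actual content is the explicit polyhedral computation, which your sketch neither performs nor replaces.
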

More geometrically, this means that if a divisor $D$ intersects non-negatively an explicit class of 
one-dimensional subvarieties of $\monbar$ (namely, the irreducible components of the one-dimensional 
stratum of the stratification of the space of stable rational curves by topological type), then $D$ is 
linearly equivalent to an effective combination of boundary divisors. 

The authors of \cite{GKM:02} remarked that Conjecture \ref{main} implies the original Fulton's conjecture 
via an easy inductive argument and checked the case $n \le 6$ of Conjecture \ref{main} using a computer 
program. A theoretical proof for $n \le 6$ was soon provided by \cite{FarGib:03}, Theorem 2, and an 
alternate proof was presented in \cite{Fontanari:05}, by introducing a convenient basis of $\Pic(\monbar)$
 defined inductively as follows:
\begin{equation}\label{eq:baseFontanari}
\begin{split}
\BB_4 :={}& \left\{ \delta_{\{2,3\}} \right\},\\
\BB_n :={}& \BB_{n-1} \cup \big\{ \delta_B\ \big\vert\ \{n-1, n-2 \} \subseteq B \subseteq \{1, \ldots, n-1 \} \big\}\\ 
&{}\cup \big\{\delta_{B^c \setminus \{ n \}}\ \big\vert\ \delta_B \in \BB_{n-1} \setminus \BB_{n-2} \big\}.
\end{split}
\end{equation}
The case $n=7$ of Conjecture \ref{main} turned out to be much more difficult, as pointed out in \cite{GKM:02}, 
p. 277: \emph{Unfortunately, the computational complexity is enormous, and beyond our machine's capabilities 
already for $n=7$}. A tour-de-force proof for $n=7$ was finally completed in \cite{Larsen:12} and a couple of 
years later a counterexample to Conjecture \ref{main} for $n=12$ was produced in \cite{Pixton:13}. 

Here instead we go back to the case $n=7$ and we present an alternate proof, much quicker and more straightforward than the original one. 
In order to do so, we exploit once again the basis  $\BB_n$, together with different techniques borrowed from 
convex geometry.  

This research was partially supported by FIRB 2012 \lq\lq Moduli spaces and Applications\rq\rq\ and by GNSAGA of INdAM (Italy).

\section{Convex geometry interpretation}
Following the steps of Larsen \cite{Larsen:12}, we rephrase Conjecture \ref{main} in terms of polyhedral cones in a finite dimensional rational vector space. We consider the vector space generated by the boundary components of $\monbar$. More precisely, we consider the boundary components indexed by the set $\mathfrak{S}$ of subsets in $\{1,\ldots,n\}$ of cardinality at least 2 in which we pick only one subset between $S$ and $S^c$ (because $S$ and $S^c$ define the same boundary component). 
We denote by $\boldsymbol{W}_n$ the rational vector space
\begin{equation}\label{eq:ambientSpace}
\boldsymbol{W}_n := \QQ \langle \Delta_S\ \vert\ S \in \mathfrak{S} \rangle\quad\text{and}\quad  N := \dim\boldsymbol{W}_n = 2^{n-1}-n-1,
\end{equation}
by $\boldsymbol{V}_n$ the subspace generated by the set $\mathscr{V}_n$ in \eqref{eq:Vn} and by $\Pic(\monbar)_{\QQ}$ the quotient space $\boldsymbol{W}_n/\boldsymbol{V}_n = \Pic(\monbar) \otimes_{\ZZ} \QQ$. Recall that 
\[
\begin{split}
&\overline{N} := \dim \Pic(\monbar)_{\QQ} = 2^{n-1}-\binom{n}{2}-1 \text{ and }\\
& M:= \dim \boldsymbol{V}_n = \binom{n}{2}-n=\frac{n(n-3)}{2}.
\end{split}
\]
Throughout the paper, we denote a generic vector $\sum_S a_S \Delta_S \in \boldsymbol{W}_n$ by $\bA$ and $[\bA]$ stands for the corresponding element $(\ldots,a_S,\ldots) \in \QQ^N$. Furthermore, we denote by $\phi_n$ the projection map $\boldsymbol{W}_n \xrightarrow{\phi_n} \boldsymbol{W}_n/\boldsymbol{V}_n$.

\begin{Definition}
We call $\mathcal{F}_n$ the \emph{$F$-nef cone} contained in $\boldsymbol{W}_n$ defined by
\begin{equation}\label{eq:FnefCone}
%\mathcal{F}_n := \bigcap_{I,J,K,L} \left\{ \sum_{S} a_S \Delta_S\ \Big\vert\ H_{I,J,K,L}(a_I,a_J,a_K,a_L) \geqslant 0\right\}.
\mathcal{F}_n := \bigcap_{I,J,K,L} \left\{ \bA\in\boldsymbol{W}_n\ \Big\vert\ H_{I,J,K,L}([\bA]) \geqslant 0\right\},
\end{equation}
where $H_{I,J,K,L}$ is the linear form
\[
w_{I \cup J} + w_{I \cup K} + w_{I \cup L} - w_I - w_J - w_K - w_L.
\]
\end{Definition}

The conjecture can be restated as follows.
\setcounter{Conjecture}{0}
\begin{Conjecture}[Convex geometry formulation A]
For every $\sum_{S} a_S \Delta_S$ in $\mathcal{F}_n$, there exists $\sum_S c_S \Delta_S \in \boldsymbol{V}_n$ such that $\sum_S a_S \Delta_S + \sum_S c_S \Delta_S$ is contained in the positive orthant of $\boldsymbol{W}_n$, i.e.~$a_S + c_S \geqslant 0$ for all $S \in \mathfrak{S}$.
\end{Conjecture}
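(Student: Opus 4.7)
The plan is to prove the equivalence with Conjecture \ref{main} by a routine dictionary, after one preliminary combinatorial check. The key point is that the linear forms $H_{I,J,K,L}$ are written on the ambient space $\bW_n$, but in fact descend to the quotient $\Pic(\monbar)_\QQ$, so that the defining inequalities of $\mathcal{F}_n$ agree exactly with the hypothesis of Conjecture \ref{main} imposed on $\phi_n(\bA)$.

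The preliminary check is that every $H_{I,J,K,L}$ vanishes identically on $\bV_n$. Fix four distinct indices $a,b,c,d$ and the corresponding generator
\[ r_{abcd} = \sum_{a,b \in S,\, c,d \notin S} \Delta_S - \sum_{a,c \in S,\, b,d \notin S} \Delta_S. \]
Evaluating $H_{I,J,K,L}(r_{abcd})$ amounts to asking, for each of the seven subsets $I\cup J,\,I\cup K,\,I\cup L,\,I,\,J,\,K,\,L$ (taking the representative in $\mathfrak{S}$, since $\Delta_T=\Delta_{T^c}$), whether its partition separates $\{a,b\}$ from $\{c,d\}$ (contribution $\pm 1$) or $\{a,c\}$ from $\{b,d\}$ (contribution $\mp 1$). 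The outcome depends only on how $\{a,b,c,d\}$ distributes among the four blocks $I,J,K,L$, so up to the symmetries between $J,K,L$ and between the pairs $\{a,b\}/\{c,d\}$ only a handful of cases remain; in each the positive and negative contributions cancel. Hence $\mathcal{F}_n = \phi_n^{-1}(\phi_n(\mathcal{F}_n))$, and ``$\bA \in \mathcal{F}_n$'' is literally the hypothesis of Conjecture \ref{main} on $\phi_n(\bA)$.

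Granted this, the equivalence of the two formulations is a formal translation. Assuming Conjecture \ref{main}, for $\bA = \sum_S a_S \Delta_S \in \mathcal{F}_n$ we apply it to $D := \phi_n(\bA)$ and obtain $b_S \ge 0$ with $D = \sum_S b_S \delta_S$; then $c_S := b_S - a_S$ satisfies $\sum_S c_S \Delta_S \in \ker\phi_n = \bV_n$ and $a_S + c_S = b_S \ge 0$. The reverse implication is the same computation read backwards, starting from any lift $\bA = \sum_S a_S \Delta_S \in \mathcal{F}_n$ of a divisor $D$ satisfying the F-nef inequalities. The only nontrivial ingredient is therefore the combinatorial invariance above; in particular no use is made of $n=7$, consistent with the role of this statement as a clean reformulation rather than a new result.
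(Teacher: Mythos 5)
Your argument is correct and, at its core, it is the same formal dictionary the paper uses: the paper's justification of formulation A is precisely the observation that $\sum_S a_S\Delta_S$ and $\sum_S(a_S+c_S)\Delta_S$ lie in the same fibre of $\phi_n$, so $b_S:=a_S+c_S$ gives the effective representation of the class, and conversely $c_S:=b_S-a_S$ lies in $\boldsymbol{V}_n$. Where you diverge is the preliminary check that each $H_{I,J,K,L}$ vanishes on $\boldsymbol{V}_n$: the claim is true, but you only sketch the case analysis, and the paper establishes the corresponding fact, $\mathcal{F}_n=\phi_n^{-1}\big(\phi_n(\mathcal{F}_n)\big)$, in a later Lemma by the one-line geometric remark that linear equivalence implies numerical equivalence rather than by combinatorics. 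More to the point, that invariance is not actually needed for formulation A: both Conjecture \ref{main} and formulation A quantify over coefficient vectors $(a_S)$ satisfying the F-inequalities, not over classes in $\Pic(\monbar)_{\QQ}$, so the translation $c_S=b_S-a_S$ goes through verbatim without knowing that the inequalities descend to the quotient; the invariance only becomes essential for the quotient formulation C. Your closing remark that no use is made of $n=7$ is consistent with the paper, which treats this statement as a reformulation and reserves the actual work for the cone computations later.
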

\noindent Indeed, both $\sum_S a_S \Delta_S$ and $\sum_S(a_S+c_S)\Delta_S$ belong to $\phi_n^{-1}(\sum_S a_S \delta_S)$, so that $\sum_S a_S \delta_S = \sum_S(a_S+c_S)\delta_S \in \Pic(\monbar)_{\QQ}$ and $\sum_S(a_S+c_S)\delta_S$ is an effective representation of $\sum_S a_S \Delta_S$.

\smallskip

Now, we want to describe more explicitly the set of vectors that can be obtained from the positive orthant $\mathcal{O}_n$ in $\boldsymbol{W}_n$ by translation of elements in $\boldsymbol{V}_n$. 
%Obviously $\mathcal{O}_n$ is the intersection of the $N$ coordinate hyperspaces
%\begin{equation}\label{eq:orthant}
%\mathcal{O}_n := \bigcap_{T} \left\{  w_T \geqslant 0\right\}.
%\end{equation}

\begin{Definition}
We denote by $\mathcal{E}_n$ the Minkowski sum between $\mathcal{O}_n$ and $\boldsymbol{V}_n$, i.e.~the set of vectors
\begin{equation}\label{eq:transEffCone}
\mathcal{E}_n := \mathcal{O}_n+\boldsymbol{V}_n = \left\{ \bE = \bP + \bV\ \big\vert\ \bP \in \mathcal{O}_n \text{ and } \bV \in \boldsymbol{V}_n \right\}.
\end{equation}
%$\mathcal{E}_n$ is a cone and we call it \emph{$\boldsymbol{V}_n$-translated effective cone} or simply \emph{translated effective cone}. 
\end{Definition}

We can further restate the conjecture as follows.
\setcounter{Conjecture}{0}
\begin{Conjecture}[Convex geometry formulation B]
%The F-nef cone $\mathcal{F}_n$ is contained in the translated effective cone $\mathcal{E}_n$.
The F-nef cone $\mathcal{F}_n$ is contained in the cone $\mathcal{E}_n$.
\end{Conjecture}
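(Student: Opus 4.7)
The plan is to reduce the containment to a finite verification on extremal rays and then argue inductively on $n$. First, observe that $\mathcal{F}_n$ is a rational polyhedral cone, cut out by the finitely many linear inequalities $H_{I,J,K,L}\ge 0$, while $\mathcal{E}_n = \mathcal{O}_n+\boldsymbol{V}_n$ is polyhedral as the Minkowski sum of a simplicial cone with a linear subspace. Since $\boldsymbol{V}_n\subseteq\mathcal{E}_n$, the containment $\mathcal{F}_n\subseteq\mathcal{E}_n$ is equivalent to $\phi_n(\mathcal{F}_n)\subseteq\phi_n(\mathcal{O}_n)$ in $\Pic(\monbar)_{\QQ}$. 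Hence it suffices to show that for every extremal ray $\rho$ of $\phi_n(\mathcal{F}_n)$ with primitive generator $[\bA_\rho]$, there exists a nonnegative expansion $[\bA_\rho]=\sum_S b_S\delta_S$ with $b_S\ge 0$.

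Second, I would coordinatize $\Pic(\monbar)_{\QQ}$ using the basis $\BB_n$ defined in \eqref{eq:baseFontanari}. Its inductive, layered construction allows one to express every $\delta_S$ in $\BB_n$-coordinates by triangular elimination against $\mathscr{V}_n$, and thereby to rewrite each $F$-nef inequality $H_{I,J,K,L}\ge 0$ as an explicit linear form on $\QQ^{\overline{N}}$. After this rewriting, the problem becomes purely combinatorial: the F-nef cone sits inside $\QQ^{\overline{N}}$, and one must prove every extremal ray is a nonnegative combination of the (overdetermined) generating set $\{\delta_S\}_{S\in\mathfrak{S}}$.

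Third, I would attempt induction on $n$. The splitting $\BB_n=\BB_{n-1}\cup\{\delta_B:\{n-1,n-2\}\subseteq B\}\cup\{\delta_{B^c\setminus\{n\}}:\delta_B\in\BB_{n-1}\setminus\BB_{n-2}\}$ parallels the geometric decomposition induced by the forgetful morphism $\monbar\to\overline{M}_{0,n-1}$, and my hope would be to split each extremal ray generator as a pullback term (effective by the inductive hypothesis) plus a \emph{boundary correction} supported on the newly introduced divisors. I would then exploit the full $S_n$-symmetry of the defining inequalities to reduce the boundary correction check to a finite list of orbit representatives, and handle each representative by exhibiting an explicit $\bV\in\boldsymbol{V}_n$ realigning it into the positive orthant $\mathcal{O}_n$.

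The hard part will be controlling the boundary corrections uniformly in $n$: the combinatorial complexity of the extremal rays of $\mathcal{F}_n$ grows explosively, and the correction terms must simultaneously satisfy all $H_{I,J,K,L}\ge 0$ constraints \emph{and} remain effective, with no obvious geometric source from which to read off nonnegative coefficients. A viable plan must identify a finite \emph{universal family} of extremal rays (modulo permutations and forgetful pullbacks) that witnesses effectiveness for the whole cone, and prove that the inductive step produces no genuinely new extremal rays outside this family; failing to establish such a closure property is precisely the combinatorial bottleneck that has historically obstructed progress on this problem.
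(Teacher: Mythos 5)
Your proposal has a fatal structural gap: it aims to prove the containment $\mathcal{F}_n \subseteq \mathcal{E}_n$ for \emph{all} $n$ by induction, but this statement is a conjecture that is known to be \emph{false} in general --- the paper itself cites Pixton's counterexample on $\overline{M}_{0,12}$ \cite{Pixton:13}. Consequently the inductive step you hope for (every extremal ray of $\phi_n(\mathcal{F}_n)$ splits as a pullback from $\overline{M}_{0,n-1}$ plus an effective boundary correction, with a ``closure property'' on a universal family of rays) cannot exist: if it did, the conjecture would propagate from the verified small cases to $n=12$, contradicting the counterexample. Any correct treatment must therefore be restricted to specific small $n$, which is exactly what the paper does: it establishes the containment only for $n=6$ and $n=7$.

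Even for fixed small $n$, your reduction runs through the step the paper explicitly identifies as infeasible. You propose to enumerate the extremal rays of $\phi_n(\mathcal{F}_n)$ (a V-representation of the F-nef cone) and test each for effectiveness; the paper states that computing these extremal rays is already out of reach for $n \geq 7$, following \cite{Larsen:12}. The paper's actual route is the opposite one: it keeps the H-representation of $\overline{\mathcal{F}_n} = \phi_n(\mathcal{F}_n)$, chooses the complement $\boldsymbol{U}$ spanned by the basis $\mathscr{B}_n$ of \eqref{eq:baseFontanari} so that the positive orthant of $\boldsymbol{U}$ already nearly contains $\overline{\mathcal{F}_n}$ (index of containment $7$ for $n=7$), and then incrementally computes H-representations of the cones $\overline{\mathcal{E}_n}^{(i)}$ obtained by adjoining the remaining classes $\delta_S$ one at a time, certifying containment at each stage by finitely many linear programs. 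For $n=7$ this terminates after adding only five classes ($\delta_{1,2,3}, \delta_{4,6,7}, \delta_{1,3,5}, \delta_{2,4,6}, \delta_{3,5,7}$), giving $\overline{\mathcal{F}_7} \subset \overline{\mathcal{E}_7}^{(5)} \subset \overline{\mathcal{E}_7}$. Your plan, by contrast, performs no verification at all and ends by naming its own missing ingredient; that ingredient is not merely hard but provably unavailable in the generality you want.
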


From an effective point of view, proving the conjecture for $n \geqslant 7$ by checking the containment of $\mathcal{F}_n$ in $\mathcal{E}_n$ seems to be not feasible due to the large dimension of the vector spaces involved.
The main difficulty is given by the kind of representations of the two cones that naturally arises from the definitions. In fact,
we have a V-representation of $\mathcal{E}_n$, i.e.~we know a finite set of vectors of $\boldsymbol{W}_n$ such that all elements of $\mathcal{E}_n$ can be described as positive linear combinations. Precisely, given a basis $\{\boldsymbol{v}^{(1)},\ldots,\boldsymbol{v}^{(M)}\}$ of $\boldsymbol{V}_n$, each element of $\mathcal{E}_n$ has a decomposition with non-negative coefficients in terms of the $N+2M = 2^{n-1} + n(n-4)-1$ vectors
\[
 \QQp\langle\Delta_S\ \vert\ S \in \mathfrak{S}\rangle + \QQp\langle\boldsymbol{v}^{(1)},\ldots,\boldsymbol{v}^{(M)}\rangle + \QQp \langle -\boldsymbol{v}^{(1)},\ldots,-\boldsymbol{v}^{(M)}\rangle.
\] 
%In fact,
%\[
%\begin{split}
%\boldsymbol{w} &{} = \boldsymbol{e} + \boldsymbol{v} = \sum_S \lambda_S \Delta_S + \sum_{i=1}^M \mu_i \boldsymbol{v}_i = {}\\
%&{} = \sum_S \lambda_S \Delta_S + \sum_{\mu_i > 0} \mu_i \boldsymbol{v}_i + \sum_{\mu_i < 0} \mu_i \boldsymbol{v}_i = {}\\
%&{} = \sum_S \lambda_S \Delta_S + \sum_{\mu_i > 0} \mu_i \boldsymbol{v}_i + \sum_{\mu_i < 0} (-\mu_i) (-\boldsymbol{v}_i).
%\end{split}
%\]

To check the containment of $\mathcal{F}_n$ in $\mathcal{E}_n$, we would need its V-representation as well. However, the cone $\mathcal{F}_n$ is described as intersection of half-spaces, i.e.~we know the H-representation. Determining a V-representation of $\mathcal{F}_n$ means to compute the extremal rays of the cone and, as already discussed in \cite{Larsen:12}, this is out of reach for $n \geqslant 7$. 

Conversely, one can try to compute the H-representation of $\mathcal{E}_n$. In principle, this approach might work better because a partial enlargement of $\mathcal{O}_n$ may suffice, namely we could try to accurately choose a vector subspace $\boldsymbol{V}' \subset \boldsymbol{V}_n$ such that
\[
\mathcal{F}_n \subset \mathcal{O}_n + \boldsymbol{V}' \subset \mathcal{O}_n + \boldsymbol{V}_n = \mathcal{E}_n.
\]
Unfortunately, computational experiments suggest that the whole subspace $\boldsymbol{V}_n$ is needed.
Indeed, in both cases $n=5$ and $n = 6$, we can explicitly compute $\mathcal{O}_n + \boldsymbol{V}_n$, but we did not succeed in finding a smaller subspace $\boldsymbol{V}'$ such that $\mathcal{F}_n \subset \mathcal{O}_n + \boldsymbol{V}'$ (see Example \ref{ex:M05} and Proposition \ref{prop:case6formB} in next section). In the case $n = 7$, we could not manage to find a subspace $\boldsymbol{V}'$ for which the H-representation of $\mathcal{O}_7 + \boldsymbol{V}'$ can be computed (in reasonable time) and such that $\mathcal{F}_7$ is contained in $\mathcal{O}_7 + \boldsymbol{V}'$. This is due to the huge growth of the number of inequalities coming out in the construction of $\mathcal{O}_7 + \boldsymbol{V}'$. To see this, we give a worst-case estimate of the number of inequalities that is exponential in the dimension of $ \boldsymbol{V}'$. The H-representation of $\mathcal{O}_n + \boldsymbol{V}'$ can be obtained from such inequalities by determining which of them describe facets of the cone. This problem is formally known as  \emph{redundancy removal problem} in the context of polyhedral computation and, in presence of large sets of data, its solution can be very time consuming.
 
Let $\{\boldsymbol{v}^{(1)},\ldots,\boldsymbol{v}^{(d)}\}$ be a basis of $\boldsymbol{V}'$. The filtration of vector spaces $\QQ\langle \boldsymbol{v}^{(1)} \rangle \subset \QQ\langle \boldsymbol{v}^{(1)}, \boldsymbol{v}^{(2)} \rangle \subset \cdots \subset \QQ\langle \boldsymbol{v}^{(1)},\ldots,\boldsymbol{v}^{(d)}\rangle$ induces the filtration of cones
\begin{equation}\label{eq:filtrationE}
\mathcal{O}_n \subset \mathcal{O}_n + \QQ\big\langle \boldsymbol{v}^{(1)} \big\rangle \subset \cdots \subset \mathcal{O}_n + \QQ\big\langle\boldsymbol{v}^{(1)},\ldots,\boldsymbol{v}^{(d)}\big\rangle.
\end{equation}
Let $\mathcal{E}_n^{(0)} = \mathcal{O}_n$ and denote by $\mathcal{E}_n^{(i)}$ the cone $\mathcal{O}_n + \QQ\langle \boldsymbol{v}^{(1)},\ldots,\boldsymbol{v}^{(i)}\rangle$. We have $\mathcal{E}_n^{(i)} = \mathcal{E}_n^{(i-1)} + \QQ\langle \boldsymbol{v}^{(i)}\rangle$. Assume to know the H-representation of $\mathcal{E}_n^{(i-1)}$:
\[
%\mathcal{E}_n^{(i-1)} = \bigcap_{j=1}^{h_{i-1}} \left\{\bA \in \boldsymbol{W}_n \big\vert\ H_j^{(i-1)}([\bA]) \geqslant 0\right\}.
\mathcal{E}_n^{(i-1)} = \bigcap_{j=1}^{h_{i-1}} \left\{H_j^{(i-1)}\geqslant 0\right\}.
\]
The generic element $\bW \in \boldsymbol{W}_n$ is contained in $\mathcal{E}_n^{(i)}$ if, and only if, there exists $t \in \QQ$ such that $\bW + t \bV^{(i)}$ is contained in $\mathcal{E}_n^{(i-1)}$: 
\[
\bW \in \mathcal{E}_n^{(i)} \ \Leftrightarrow \ \exists\ t \text{~s.t.~} H_j^{(i-1)}([\bW]+t[\bV^{(i)}]) \geqslant 0,\ \forall\ j = 1,\ldots,h_{i-1}.
\]
Hence, we obtain the system of inequalities 
\[
\begin{cases}
H^{0}_j([\bW]) \geqslant 0,& j=1,\ldots,h^0_{i},\\
t \geqslant H^{+}_j([\bW]),& j=1,\ldots,h^+_{i},\\
t \leqslant H^{-}_j([\bW]),& j=1,\ldots,h^-_{i},
\end{cases}
\]
where $H^{0,+,-}_j$ are linear forms in the variables $w_S$ and $h^0_{i} + h^+_{i} + h^-_{i} = h_{i-1}$. The first type of inequality arises whenever the parameter $t$ does not appear in $H_j^{(i-1)}([\bW]+t[\bV^{(i)}])$, whereas the second and third type of inequality arise when $t$ appears (depending on the sign of its coefficient).  The H-representation of $\mathcal{E}_n^{(i)}$ can be deduced by the inequalities that ensures the existence of $t \in \QQ$. We need to consider the intersection of the following $h^0_{i} + h^+_{i} \cdot h^-_{i}$ half-spaces:
\begin{align*}
& H^{0}_j \geqslant 0, && j=1,\ldots,h_i^0\\ 
& H^{-}_{j_-} -  H^{+}_{j_+} \geqslant 0, && j_-=1,\ldots,h_i^-,\quad j_+=1,\ldots,h_i^+. 
\end{align*}
The worst case would happen whenever at each step we have $h_i^0 = 0$ and $h_i^+ = h_i^- = h_{i-1}/2$. Then, the number of half-spaces needed in the $H$-representation of $\mathcal{E}_n^{(d)}$ is bounded by
\[
\begin{split}
h_d^{\textsc{max}}&{} = \left(\frac{h^{\textsc{max}}_{d-1}}{2}\right)^2 = \frac{1}{2^2} \left(\frac{h^{\textsc{max}}_{d-2}}{2}\right)^{2^2} = \cdots = \frac{\left(h_0^\textsc{max}\right)^{2^d}}{2^{2+\cdots+2^d}}  = \frac{N^{2^d}}{4^{2^d-1}},
\end{split}
\]
since the positive orthant $\mathcal{O}_n = \mathcal{E}_n^{(0)}$ is obviously given by the intersection of $N$ half-spaces. This means that the number of half-spaces defining $\mathcal{E}_n$ is bounded by
\[
h_M^{\textsc{max}} = \frac{N^{2^M}}{4^{2^M-1}} = \frac{\left(2^{n-1}-n-1\right)^{2^{n(n-3)/2}}}{4^{2^{n(n-3)/2-1}}} \in O\left(2^{2^{n^2 \log n}}\right). 
\]

\begin{Example}\label{ex:M05}
Let us look at Conjecture \ref{main} in the case of the moduli space $\overline{M}_{0,5}$ viewed in terms of the second convex geometry formulation. The vector space $\boldsymbol{W}_5$ is generated by the boundary components
\[
\Delta_{1, 2},\, \Delta_{1, 3},\, \Delta_{1, 4},\, \Delta_{1, 5},\, \Delta_{2, 3},\, \Delta_{2, 4},\, \Delta_{2, 5},\, \Delta_{3, 4},\, \Delta_{3, 5},\, \Delta_{4, 5},
\]
the subspace $\boldsymbol{V}_5$ is spanned by
\begin{align*}
& \bV^{(1)} = \Delta_{1, 2}-\Delta_{1, 3}-\Delta_{2, 4}+\Delta_{3, 4},
&& \bV^{(2)} = \Delta_{1, 2}-\Delta_{1, 4}-\Delta_{2, 3}+\Delta_{3, 4},\\
& \bV^{(3)} = \Delta_{1, 2}-\Delta_{1, 3}-\Delta_{2, 5}+\Delta_{3, 5},
&& \bV^{(4)} = \Delta_{1, 2}-\Delta_{1, 5}-\Delta_{2, 3}+\Delta_{3, 5},\\
& \bV^{(5)} = \Delta_{1, 2}-\Delta_{1, 4}-\Delta_{2, 5}+\Delta_{4, 5},
\end{align*}
and the $F$-nef cone $\mathcal{F}_5$ is defined by the following 10 half-spaces:
\begin{align*}
& w_{3, 4}+w_{3, 5}+w_{4, 5} -w_{1, 2}\geqslant 0, &&
w_{2, 4}+w_{2, 5}+w_{4, 5} -w_{1, 3} \geqslant 0,\\ 
& w_{2, 3}+w_{2, 5}+w_{3, 5} -w_{1, 4} \geqslant 0,&&
w_{2, 3}+w_{2, 4}+w_{3, 4} -w_{1, 5}  \geqslant 0,\\
& w_{1, 4}+w_{1, 5}+w_{4, 5}-w_{2, 3} \geqslant 0,&&
 w_{1, 3}+w_{1, 5}+w_{3, 5} -w_{2, 4} \geqslant 0,\\
&  w_{1, 3}+w_{1, 4}+w_{3, 4}-w_{2, 5} \geqslant 0,&&
   w_{1, 2}+w_{1, 5}+w_{2, 5}-w_{3, 4}  \geqslant 0,\\
&    w_{1, 2}+w_{1, 4}+w_{2, 4}-w_{3, 5} \geqslant 0,&&
     w_{1, 2}+w_{1, 3}+w_{2, 3}-w_{4, 5} \geqslant 0.
\end{align*}
We determine explicitly the inequalities defining the second cone $\mathcal{E}_5^{(1)} = \mathcal{O}_5 + \QQ\langle\bV^{(1)}\rangle$ in the filtration \eqref{eq:filtrationE}. The generic vector $\bW$ is contained in $\mathcal{E}_5^{(1)}$ if, and only if, there exists $t\in\QQ$ such that $\bW + t \bV^{(1)}$ is contained in the positive orthant, namely
\[
\begin{cases}
w_{1,2} + t \geqslant 0\\
w_{1,3} - t \geqslant 0\\
w_{1,4} \geqslant 0\\
w_{1,5} \geqslant 0\\
w_{2,3} \geqslant 0\\
w_{2,4} - t \geqslant 0\\
w_{2,5} \geqslant 0\\
w_{3,4} + t \geqslant 0\\
w_{3,5} \geqslant 0\\
w_{4,5} \geqslant 0
\end{cases}
\Longrightarrow\quad
\begin{cases}
t \geqslant -w_{1,2}\\
t \leqslant w_{1,3}\\
w_{1,4} \geqslant 0\\
w_{1,5} \geqslant 0\\
w_{2,3} \geqslant 0\\
t \leqslant w_{2,4} \\
w_{2,5} \geqslant 0\\
 t \geqslant -w_{3,4}\\
w_{3,5} \geqslant 0\\
w_{4,5} \geqslant 0
\end{cases}
\Longrightarrow\quad
\begin{cases}
w_{1,4} \geqslant 0\\
w_{1,5} \geqslant 0\\
w_{2,3} \geqslant 0\\
w_{2,5} \geqslant 0\\
w_{3,5} \geqslant 0\\
w_{4,5} \geqslant 0\\
w_{1,2} + w_{1,3}  \geqslant 0 \\
w_{1,2} + w_{2,4} \geqslant 0 \\
w_{1,3} + w_{3,4} \geqslant 0\\
w_{2,4} + w_{3,4} \geqslant 0\\
\end{cases}.
\]
Iterating the process, we find $12$, $15$, $22$ and $37$ inequalities defining the cones $\mathcal{E}_5^{(2)}$, $\mathcal{E}_5^{(3)}$, $\mathcal{E}_5^{(4)}$ and $\mathcal{E}_5^{(5)}$. The most efficient way to compute the H-representations of the cones has been using the software \texttt{polymake} \cite{polymake}  and its algorithm \href{https://polymake.org/release_docs/3.0/polytope.html#minkowski_sum__Polytope__Polytope}{\texttt{minkowski\_sum}}. Precisely, the cones in the filtration %$\mathcal{E}_5^{(1)}$, $\mathcal{E}_5^{(2)}$, $\mathcal{E}_5^{(3)}$, $\mathcal{E}_5^{(4)}$ and $\mathcal{E}_5^{(5)}$ 
have 10, 10, 12, 11 and 10 facets. The cone $\mathcal{E}_5$ is defined by the following 10 inequalities
\begin{align*}
& w_{1, 2}+w_{1, 3}+w_{1, 4}+w_{1, 5} \geqslant 0,
&& w_{1, 2}+w_{1, 3}+w_{1, 4}+w_{2, 3}+w_{2, 4}+w_{3, 4}  \geqslant 0,\\ 
& w_{1, 2}+w_{2, 3}+w_{2, 4}+w_{2, 5} \geqslant 0,
&& w_{1, 2}+w_{1, 3}+w_{1, 5}+w_{2, 3}+w_{2, 5}+w_{3, 5}  \geqslant 0,\\
& w_{1, 3}+w_{2, 3}+w_{3, 4}+w_{3, 5}  \geqslant 0,
&& w_{1, 2}+w_{1, 4}+w_{1, 5}+w_{2, 4}+w_{2, 5}+w_{4, 5}  \geqslant 0,\\
& w_{1, 4}+w_{2, 4}+w_{3, 4}+w_{4, 5}  \geqslant 0,
&& w_{1, 3}+w_{1, 4}+w_{1, 5}+w_{3, 4}+w_{3, 5}+w_{4, 5}  \geqslant 0,\\
& w_{1, 5}+w_{2, 5}+w_{3, 5}+w_{4, 5}  \geqslant 0,
&& w_{2, 3}+w_{2, 4}+w_{2, 5}+w_{3,4}+w_{3, 5}+w_{4, 5}  \geqslant 0,
\end{align*}
 and it is easy to check that it contains $\mathcal{F}_5$.
\end{Example}

\normalsize
Since the large dimension of the ambient vector space $\boldsymbol{W}_n$ represents a major obstacle in the effective computation of V- and H-representations of the polyhedral cones involved, we now move to the quotient space $\boldsymbol{W}_n/\boldsymbol{V}_n$.

\begin{Lemma}
Let $\phi_n : \boldsymbol{W}_n \to \boldsymbol{W}_n/\boldsymbol{V}_n$ be the projection map. Then,
\begin{equation}\label{eq:twoWays}
\mathcal{F}_n = \phi_n^{-1}\big(\phi_n(\mathcal{F}_n)\big) \qquad \text{and}\qquad
\mathcal{E}_n = \phi_n^{-1}\big(\phi_n(\mathcal{E}_n)\big).
\end{equation}
\end{Lemma}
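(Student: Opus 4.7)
The plan is to recast each of the two equalities in \eqref{eq:twoWays} as a translation-invariance statement. For any subset $C \subseteq \boldsymbol{W}_n$, the elementary identity
\[
\phi_n^{-1}\big(\phi_n(C)\big) = C + \boldsymbol{V}_n
\]
holds: an element $y$ lies in the left-hand side iff $y - c \in \ker\phi_n = \boldsymbol{V}_n$ for some $c \in C$. Since $0 \in \boldsymbol{V}_n$ gives $C \subseteq C + \boldsymbol{V}_n$ automatically, the lemma reduces to establishing the two inclusions
\[
\mathcal{F}_n + \boldsymbol{V}_n \subseteq \mathcal{F}_n \qquad \text{and} \qquad \mathcal{E}_n + \boldsymbol{V}_n \subseteq \mathcal{E}_n.
\]

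The second inclusion is immediate from the Minkowski-sum definition of $\mathcal{E}_n$: since $\boldsymbol{V}_n$ is a linear subspace,
\[
\mathcal{E}_n + \boldsymbol{V}_n = (\mathcal{O}_n + \boldsymbol{V}_n) + \boldsymbol{V}_n = \mathcal{O}_n + \boldsymbol{V}_n = \mathcal{E}_n.
\]

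For the first, since $\mathcal{F}_n$ is cut out by the linear inequalities $H_{I,J,K,L}([\cdot]) \geqslant 0$, translation invariance by $\boldsymbol{V}_n$ is equivalent to the assertion that each linear form $H_{I,J,K,L}$ vanishes identically on $\boldsymbol{V}_n$. By linearity, it is enough to check this on each Keel generator
\[
\boldsymbol{v}_{a,b,c,d} \;=\; \sum_{a,b\in S,\, c,d\notin S} \Delta_S \;-\; \sum_{a,c\in S,\, b,d\notin S} \Delta_S,
\]
with $a,b,c,d$ distinct in $\{1,\ldots,n\}$. I would then perform a direct case analysis on how the quadruple $\{a,b,c,d\}$ distributes among the four blocks $I, J, K, L$ of the partition: the only subsets $T$ contributing to $H_{I,J,K,L}([\boldsymbol{v}_{a,b,c,d}])$ are the seven subsets $I\cup J, I\cup K, I\cup L, I, J, K, L$, each carrying a $\pm 1$ from the form and an $\epsilon_T\in\{-1,0,+1\}$ from the generator according to whether $T$ (or, under the identification $\Delta_T=\Delta_{T^c}$, its complement) satisfies one of the two support conditions. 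Case by case, the positive contributions match the negative ones, yielding $H_{I,J,K,L}([\boldsymbol{v}_{a,b,c,d}])=0$.

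The main (and essentially only) obstacle is the bookkeeping: one has to track simultaneously the $\pm$ signs coming from $H_{I,J,K,L}$ and the identification $\Delta_S = \Delta_{S^c}$ implicit in \eqref{eq:Vn}. Conceptually, though, the cancellation is forced, because the forms $H_{I,J,K,L}$ are the F-curve intersection pairings, and such pairings are well-known to descend to $\Pic(\monbar)_{\QQ}$; hence the vanishing on $\boldsymbol{V}_n$ is guaranteed a priori and only the explicit verification remains.
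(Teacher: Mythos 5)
Your proposal is correct and follows essentially the same route as the paper: reduce both equalities to invariance under translation by $\boldsymbol{V}_n$, note that $\mathcal{E}_n$ is invariant by its Minkowski-sum definition, and justify the invariance of $\mathcal{F}_n$ by the fact that the forms $H_{I,J,K,L}$ are F-curve intersection numbers, hence vanish on the Keel relations (the paper phrases this as ``linear equivalence implies numerical equivalence''). The explicit case-by-case cancellation you sketch is never carried out, but it is not needed, since the a priori geometric argument you also give already closes the step.
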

\begin{proof}
The property that $A = \phi_n^{-1}\big(\phi_n(A)\big)$ characterizes the subsets $A$ of $\boldsymbol{W}_n$ that are invariant under translations by elements of $\boldsymbol{V}_n$, i.e.~such that
\[
\forall\ \boldsymbol{a}\in A,\ \forall\ \boldsymbol{v} \in \boldsymbol{V}_n \quad \Longrightarrow\quad \boldsymbol{a}+\boldsymbol{v} \in A. 
\]
The cone $\mathcal{E}_n$ is invariant under translations by elements of $\boldsymbol{V}_n$ by definition and $\mathcal{F}_n$ is invariant because linear equivalence of divisors implies numerical equivalence.
\end{proof}

\setcounter{Conjecture}{0}
\begin{Conjecture}[Convex geometry formulation C]
The F-nef cone $\overline{\mathcal{F}_n} := \phi_n(\mathcal{F}_n)$ in $\Pic(\monbar)_{\QQ}$ is contained in the cone $\overline{\mathcal{E}_n} := \phi_n(\mathcal{E}_n)$.
\end{Conjecture}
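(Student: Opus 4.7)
The plan is to pass once and for all to the quotient $\Pic(\overline{M}_{0,7})_{\QQ}$, where $\overline{\mathcal{E}_7}$ admits a very economical V-representation and the ambient dimension drops from $N=56$ to $\overline{N}=42$. First, fix the basis $\BB_7$ from \eqref{eq:baseFontanari} and, via the Keel relations $\mathscr{V}_7$, write each of the $56$ boundary classes $\delta_S$ as a $\QQ$-linear combination of the $42$ elements of $\BB_7$. Because $\overline{\mathcal{E}_7}=\phi_7(\mathcal{O}_7+\boldsymbol{V}_7)$ is by construction the image of the positive orthant, the cone $\overline{\mathcal{E}_7}\subset\QQ^{42}$ is automatically generated by the $56$ coordinate vectors $[\delta_S]$. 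In parallel, translate the $F$-nef half-spaces $H_{I,J,K,L}\geqslant 0$ into the same coordinates; by the preceding Lemma, they define $\overline{\mathcal{F}_7}$ faithfully.

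Second, compute the H-representation of $\overline{\mathcal{E}_7}$ with a polyhedral package such as \texttt{polymake}, producing a finite list of facet inequalities $\ell_j\geqslant 0$ whose intersection is the effective cone. The inclusion $\overline{\mathcal{F}_7}\subseteq\overline{\mathcal{E}_7}$ then reduces, via Farkas' lemma, to showing that each $\ell_j$ is a non-negative rational combination of the $F$-nef linear forms $H_{I,J,K,L}$ in the same coordinates. Each such certificate is a small LP feasibility test. Two structural reductions keep the problem tractable: the symmetric group $S_7$ acts on both cones and organizes the $\delta_S$, the $F$-nef forms and the facets of $\overline{\mathcal{E}_7}$ into a small number of orbits, so only one representative per orbit actually has to be certified; and working in the quotient sidesteps the exponential blow-up estimated earlier for the iterated Minkowski construction in $\boldsymbol{W}_n$, since a basis of $\boldsymbol{V}_7$ no longer needs to be adjoined one vector at a time.

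The main obstacle is precisely the production and certification of this H-representation. Even after the drop to dimension $42$, the facet count of $\overline{\mathcal{E}_7}$ need not be small, and \emph{a priori} nothing guarantees that every facet inequality can be built out of the $H_{I,J,K,L}$ alone: Pixton's counterexample for $n=12$ shows that such Farkas certificates do not exist in general, so the real content of the argument is the orbit-by-orbit verification that they do exist when $n=7$. The inductive basis $\BB_7$ is carefully designed so that many of the facets of $\overline{\mathcal{E}_7}$ can be read off directly from its recursive definition, which should allow a substantial portion of the verification to be carried out by a structured convex-geometric argument mirroring the construction of $\BB_n$, rather than by brute-force enumeration.
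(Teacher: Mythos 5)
Your overall framework is sound and close in spirit to the paper: you pass to the quotient via the basis $\BB_7$, observe that $\overline{\mathcal{E}_7}$ is generated by the images of the $56$ classes $\delta_S$, and reduce the containment $\overline{\mathcal{F}_7}\subseteq\overline{\mathcal{E}_7}$ to finitely many linear-programming certificates (your Farkas formulation is just the LP dual of the paper's test that $\min H_j$ over $\overline{\mathcal{F}_7}$ is not $-\infty$, i.e.\ that the index of containment vanishes). Note also that the displayed statement is the conjecture for all $n$, which is false for $n\geq 12$ by Pixton's example cited in the paper, so at best one proves the case $n=7$ (Theorem \ref{prop:case7formC}); you implicitly and correctly restrict to that case.

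The genuine gap is the step you yourself flag as the main obstacle: your argument requires the H-representation (or at least a complete facet description) of the \emph{full} cone $\overline{\mathcal{E}_7}$, and you give no method that is shown, or even plausibly argued, to produce it. This is precisely the computation the paper is structured to avoid: in the quotient filtration \eqref{eq:filtrationBar} the facet numbers already grow to $69929$ at step $7$ of $14$, with rapidly increasing running times, and the paper never computes $\overline{\mathcal{E}_7}$ itself. Its key idea is that a \emph{partial} enlargement suffices: guided by the index of containment, it adds only the five classes $\delta_{1,2,3},\delta_{4,6,7},\delta_{1,3,5},\delta_{2,4,6},\delta_{3,5,7}$ to the orthant of $\QQ\langle\BB_7\rangle$, computes the $5753$ facets of $\overline{\mathcal{E}_7}^{(5)}$, verifies $\Gamma\big(\overline{\mathcal{F}_7},\overline{\mathcal{E}_7}^{(5)}\big)=0$ by LP, and concludes $\overline{\mathcal{F}_7}\subset\overline{\mathcal{E}_7}^{(5)}\subset\overline{\mathcal{E}_7}$. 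Your two proposed remedies are not substantiated: the $S_7$-action does permute the generators and the F-inequalities, but you would still need a symmetry-exploiting facet enumeration in dimension $42$ with $56$ generators, whose feasibility you merely assert; and the hope that the facets of $\overline{\mathcal{E}_7}$ can be ``read off'' from the recursive definition of $\BB_n$ is stated without any argument (nothing in the construction \eqref{eq:baseFontanari} yields facet inequalities of the projected orthant). As it stands, the proposal defers the entire difficulty to an unverified computation, whereas the paper's proof replaces that computation by the index-of-containment strategy; to repair your route you would either have to actually produce and certify the facet list of $\overline{\mathcal{E}_7}$ (up to symmetry), or adopt the paper's device of certifying containment in a suitable intermediate cone of the filtration.
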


We immediately notice that the cone $\overline{\mathcal{E}_n}$ can be obtained as projection of the positive orthant $\mathcal{O}_n \subset \boldsymbol{W}_n$. Indeed, by definition the preimage of $\phi_n(\mathcal{O}_n)$ is the smallest subspace containing $\mathcal{O}_n$ invariant under translations by elements of $\boldsymbol{V}_n$, and by construction this subspace is $\mathcal{O}_n + \boldsymbol{V}_n = \mathcal{E}_n$. 

%\begin{Remark}
%The cone $\overline{\mathcal{E}_n}$ is generated by at most $N = 2^{n-1} - n - 1$ extremal rays $\{\phi_n(\Delta_S)\}$. Recall that the V-representation of the cone $\mathcal{E}_n$ consists of at most $N+2M = N + n(n-3)$ extremal rays.
%\end{Remark}

A nice way to describe the cone $\overline{\mathcal{F}_n}$ is to consider a vector subspace $\boldsymbol{U} \subset \boldsymbol{W}_n$ such that $\boldsymbol{U} \oplus \boldsymbol{V}_n \simeq \boldsymbol{W}_n$ (so that $\boldsymbol{U} \simeq \boldsymbol{W}_n/\boldsymbol{V}_n \simeq \Pic(\monbar)_{\QQ}$). We focus on subspaces $\boldsymbol{U}$ spanned by subsets $\mathscr{U}$ of the classes of boundary components $\{\delta_S\ \vert\ S\in\mathfrak{S}\}$. Let $\mathfrak{U} \subset \mathfrak{S}$ be the set of subsets $S$ indexing the elements in $\mathscr{U}$. In this way, we have
\[
\overline{\mathcal{F}_n} = \phi_n(\mathcal{F}_n) = \mathcal{F}_n \cap \boldsymbol{U} = \mathcal{F}_n \cap \left\{ w_S = 0\ \vert\ \forall\ S \in \mathfrak{S}\setminus\mathfrak{U} \right\}.
\]
At this point, we face again the problem of establishing the containment of the cone $\overline{\mathcal{F}_n}$ described by a H-representation in the cone $\overline{\mathcal{E}_n}$ described by a V-representation, but with a sensible reduction of the dimensions of the ambient spaces and of the number of inequalities defining the cones.%, at least in the cases we are interested in (see Table \ref{tab:comparison}). 

%\begin{table}
%\begin{center}
%\begin{tikzpicture}[scale=1.]
%\draw[thick] (0,1) -- (6,1);
%\draw[thick] (-4,0) -- (6,0);
%\draw (-4,-1) -- (6,-1);
%\draw[thick] (-4,-2) -- (6,-2);
%\draw (-4,-3) -- (6,-3);
%\draw[thick] (-4,-4) -- (6,-4);
%
%\draw[thick] (-4,0) -- (-4,-4);
%\draw[thick] (0,1) -- (0,-4);
%\draw[thick] (2,1) -- (2,-4);
%\draw[thick] (4,1) -- (4,-4);
%\draw[thick] (6,1) -- (6,-4);
%\node at (1,0.5) [] {$n=5$};
%\node at (3,.5) [] {$n=6$};
%\node at (5,.5) [] {$n=7$};
%
%\node at (-2,-0.5) [] {$\dim \boldsymbol{W}_n$};
%\node at (-2, -1.5) [] {$\begin{array}{c}\text{number of extremal} \\ \text{rays of }  \mathcal{E}_n\end{array}$};
%\node at (-2, -2.5) [] {$\dim \big(\boldsymbol{W}_n/\boldsymbol{V}_n\big)$};
%\node at (-2, -3.5) [] {$\begin{array}{c}\text{number of} \\ \text{extremal rays of }  \overline{\mathcal{E}_n}\end{array}$};
%
%\node at (1,-0.5) [] {10};
%\node at (1,-1.5) [] {20};
%\node at (1,-2.5) [] {5};
%\node at (1,-3.5) [] {10};
%
%\node at (3,-0.5) [] {25};
%\node at (3,-1.5) [] {43};
%\node at (3,-2.5) [] {16};
%\node at (3,-3.5) [] {25};
%
%\node at (5,-0.5) [] {56};
%\node at (5,-1.5) [] {84};
%\node at (5,-2.5) [] {42};
%\node at (5,-3.5) [] {56};
%
%\end{tikzpicture}
%\caption{\label{tab:comparison} Comparison}
%\end{center}
%\end{table}

The cone $\overline{\mathcal{E}_n}$ is generated by the vectors $\phi_n(\Delta_S) = \delta_S$, that we divide in two groups based on membership in the basis $\mathscr{U}$ of $\boldsymbol{U}$:
\begin{equation}
\big\{ \delta_S\ \vert\ S \in \mathfrak{U}\big\} \cup \left\{ \delta_S = \sum_{T \in \mathfrak{U}} e_T \delta_T\  \Big\vert\ S \in \mathfrak{S}\setminus\mathfrak{U}\right\}.
\end{equation}
The first set of $\overline{N}$ vectors generates the positive orthant of the subspace $\boldsymbol{U}$ for which we know both the V- and the H-representation. Hence, we use again an incremental procedure. Chosen an ordering $\bE^{(1)},\ldots,\bE^{(M)}$ on the elements of $\{ \delta_S = \sum_{T} e_T \delta_T\  \vert$ $S \in \mathfrak{S}\setminus\mathfrak{U}\}$, we denote by $\overline{\mathcal{E}_n}^{(0)}$ the positive orthant $\QQp\langle \delta_S\ \vert\ S \in \mathfrak{U} \rangle \subset \boldsymbol{U}$ and by $\overline{\mathcal{E}_n}^{(i)}$ the cone $\overline{\mathcal{E}_n}^{(i-1)} + \QQp\langle \bE^{(i)}\rangle$.
We have the filtration
\begin{equation}\label{eq:filtrationBar}
\overline{\mathcal{E}_n}^{(0)} \subset \overline{\mathcal{E}_n}^{(1)} \subset \cdots \subset \overline{\mathcal{E}_n}^{(M)} = \overline{\mathcal{E}_n}
\end{equation}
and we look for the smallest $k$ such that 
\[
\overline{\mathcal{F}_n} \subset \overline{\mathcal{E}_n}^{(k)} = \QQp\big\langle \delta_S\ \big\vert\ S \in \mathfrak{U}\big\rangle + \QQp \big\langle\bE^{(1)},\ldots,\bE^{(k)}\big\rangle \subset \overline{\mathcal{E}_n}.
\]

We introduce a integer index that we use to measure how the cones in the filtration are far from containing the $F$-nef cone $\overline{\mathcal{F}_n}$.
\begin{Definition}
Let $\mathcal{C}$ and $\mathcal{D}$ be two polyhedral cones contained in a same vector space. Let $\bigcap_{j} \{ H_j \geqslant 0\}$ be the H-representation of $\mathcal{D}$. We define the \emph{index of containment} of $\mathcal{C}$ in $\mathcal{D}$ as the integer
\[
\Gamma(\mathcal{C},\mathcal{D}) := \# \left\{ H_j\ \big\vert\ \min H_j([\bC]) = -\infty,\ \bC \in \mathcal{C}\right\}.
\]
\end{Definition}
Notice that if we also know the H-representation $\bigcap_{k} \{ L_k \geqslant 0\}$ of $\mathcal{C}$, the index of containment can be efficiently computed by applying linear programming algorithms. Indeed, we need to count the number of unbounded linear optimizations $\min \{H_j([\bW])\ \vert\ L_k([\bW]) \geqslant 0,\ \forall\ k\}$.

%\begin{Example}
%\emph{In $\QQ^3$ due coni con disegno e con calcolo dell'indice di contenimento}
%\end{Example}

In order to minimize the number of steps $k$ necessary for $\overline{\mathcal{E}_n}^{(k)}$ to contain $\overline{\mathcal{F}_n}$, we try to construct a filtration in which the index of containment $\Gamma(\overline{\mathcal{F}_n},\overline{\mathcal{E}_n}^{(i)})$ is as small as possible at each step. We are free to choose
\begin{enumerate}[1.]
\item the subspace $\boldsymbol{U} = \QQ\langle\delta_S\ \vert\ S\in \mathfrak{U}\rangle$,
\item the order in which we add the elements $\delta_S = \sum_{T\in\mathfrak{U}} e_T \delta_T,\ S \notin \mathfrak{U}$.
\end{enumerate}
The first choice is crucial because the starting cone $\overline{\mathcal{E}_n}^{(0)}$ is the positive orthant of $\boldsymbol{U}$. From computational studies, we found that the minimal index of containment $\Gamma(\overline{\mathcal{F}_n},\overline{\mathcal{E}_n}^{(0)})$ is obtained considering the subspace $\boldsymbol{U} \simeq \Pic(\monbar)_{\QQ}$ generated by the basis $\mathscr{B}_n$ in \eqref{eq:baseFontanari} (see Table \ref{tab:indexOfContainmentOrthant} for a detailed report on the computational experiment).

\begin{table}[!ht]
\begin{center}
\begin{tikzpicture}[scale=0.85]
\draw[thick] (0,0) -- (6,0);
%\draw (-4.5,0) -- (6,0);
\draw[thick] (-4.5,-1) -- (6,-1);
\draw (-4.5,-2) -- (6,-2);
\draw (-4.5,-3) -- (6,-3);
\draw (-4.5,-4) -- (6,-4);
\draw (-4.5,-5) -- (6,-5);
\draw[thick] (-4.5,-6) -- (6,-6);

\draw[thick] (-4.5,-1) -- (-4.5,-6);
\draw[thick] (0,0) -- (0,-6);
\draw[thick] (2,0) -- (2,-6);
\draw[thick] (4,0) -- (4,-6);
\draw[thick] (6,0) -- (6,-6);
\node at (1,-.5) [] {$n=5$};
\node at (3,-.5) [] {$n=6$};
\node at (5,-.5) [] {$n=7$};

\node at (-2.25,-1.5) [] {\footnotesize $\dim \boldsymbol{V}_n\ /\ \dim \boldsymbol{W}_n$};

%\node at (-2.25, -1.5) [] {\footnotesize $\begin{array}{c}\text{number of } \boldsymbol{U}\subset\boldsymbol{W}_n\\ \text{s.t.~}  \boldsymbol{U}\oplus\boldsymbol{V}_n = \boldsymbol{W}_n\end{array}$};

\node at (-2.25, -2.5) [] {\footnotesize $\min \Gamma\big(\overline{\mathcal{F}_n},\overline{\mathcal{E}_n}^{(0)}\big)$};

\node at (-2.25, -3.5) [] {\footnotesize $\max \Gamma\big(\overline{\mathcal{F}_n},\overline{\mathcal{E}_n}^{(0)}\big)$};

\node at (-2.25, -4.5) [] {\footnotesize $\text{average } \Gamma\big(\overline{\mathcal{F}_n},\overline{\mathcal{E}_n}^{(0)}\big)$};

\node at (-2.25, -5.5) [] {\footnotesize $\Gamma\big(\overline{\mathcal{F}_n},\QQp\langle \mathscr{B}_n\rangle\big)$};

\node at (1,-1.5) [] {\small $ 5\ /\ 10$};
%\node at (1,-1.5) [] {\small $162$};
\node at (1,-2.5) [] {\small $0$};
\node at (1,-3.5) [] {\small $2$};
%\node at (1,-4.5) [] {$1.\overline{1}$};
\node at (1,-4.5) [] {\small $1.\overline{1}$};
\node at (1,-5.5) [] {\small $0$};

\node at (3,-1.5) [] {\small $9 \ /\ 25$};
%\node at (3,-1.5) [] {\small ${\sim}10^{5.88}\ \footnotemark[2]$};
\node at (3,-2.5) [] {\small $1$};
\node at (3,-3.5) [] {\small $15\ \footnotemark[2]$};
\node at (3,-4.5) [] {\small $7.09\ \footnotemark[2]$};
\node at (3,-5.5) [] {\small $1$};

\node at (5,-1.5) [] {\small $14 \ /\ 56$};
%\node at (5,-1.5) [] {\small ${\sim}10^{12}\ \footnotemark[2]$};
\node at (5,-2.5) [] {\small $7$};
\node at (5,-3.5) [] {\small $ 42\ \footnotemark[2]$};
\node at (5,-4.5) [] {\small $ 30.16\ \footnotemark[2]$};
\node at (5,-5.5) [] {\small $7$};

\end{tikzpicture}
\caption{\label{tab:indexOfContainmentOrthant} Index of containment of the $F$-nef cone $\overline{\mathcal{F}_n}$ in the positive orthant $\overline{\mathcal{E}_n}^{(0)} \subset \boldsymbol{U}$ varying the subspace $\boldsymbol{U} \subset	\boldsymbol{W}_n$ for $n=5,6,7$. The last line shows the index of containment of $\overline{\mathcal{F}_n}$ in the positive orthant in the case of $\boldsymbol{U}$ spanned by the basis $\mathscr{B}_n$.}
\end{center}
\end{table}

\footnotetext[2]{We give an estimate of the maximum and of the average of the index of containment in the case $n=6,7$ based on a subset of 100000 subspaces $\boldsymbol{U}\subset \boldsymbol{W}_n$ of dimension $\overline{N}$ randomly chosen among the whole set of $\binom{N}{M}$ possibilities.}

Regarding the second choice, we remark that if $\min \{ H_j([\bA])\ \vert\ \bA \in \overline{\mathcal{F}_n}\} = -\infty$, then a piece of $\overline{\mathcal{F}_n}$ lies in the half-space $H_j < 0$. Now, assume that the index of containment $\Gamma(\overline{\mathcal{F}_n},\overline{\mathcal{E}_n}^{(i-1)})$ is $\gamma$, namely there are $\gamma$ half-spaces $H_j < 0$ containing a portion of the $F$-nef cone. We need to enlarge the cone in all such directions and we choose a vector $\delta_S,\ S \notin \mathfrak{U}$ such that for any other vector $\delta_{S'},\ S' \notin \mathfrak{U}$ the number of inequalities $H_j < 0$ satisfied by $\delta_{S'}$ is at most the number of inequalities satisfied by $\delta_S$.

\section{Main results}

In this section we prove Conjecture \ref{main} for $n=7$ and $n=6$. The code used in the computation is available at the webpage \href{http://www.paololella.it/EN/Publications.html}{\texttt{www.paololella.it/EN/} \texttt{Publications.html}}. It is based on the the simultaneous use of software \emph{Macaulay2} \cite{M2}, \texttt{polymake} \cite{polymake} and \texttt{lpSolve} \cite{lpSolve} (through the \texttt{R} interface).

\subsection{\texorpdfstring{Case $n=7$}{Case n = 7}}

The vector space $\boldsymbol{W}_7$ is generated by the boundary components
\[
\begin{split}
&\Delta_{1, 2},\, \Delta_{1, 2, 3},\, \Delta_{1, 2, 4},\, \Delta_{1, 2, 5},\, \Delta_{1, 2, 6},\, \Delta_{1, 2, 7},\, \Delta_{1, 3},\, \Delta_{1, 3, 4},\, \Delta_{1, 3, 5},\, \Delta_{1, 3, 6}, \\
&\Delta_{1, 3, 7},\, \Delta_{1, 4},\, \Delta_{1, 4, 5},\, \Delta_{1, 4, 6},\, \Delta_{1, 4, 7},\, \Delta_{1, 5},\, \Delta_{1, 5, 6},\, \Delta_{1, 5, 7},\, \Delta_{1, 6},\, \Delta_{1, 6, 7},\\
& \Delta_{1, 7},\, \Delta_{2, 3},\, \Delta_{2, 3, 4},\, \Delta_{2, 3, 5},\, \Delta_{2, 3, 6},\, \Delta_{2, 3, 7},\, \Delta_{2, 4},\, \Delta_{2, 4, 5},\, \Delta_{2, 4, 6},\, \Delta_{2, 4, 7},\\
& \Delta_{2, 5},\, \Delta_{2, 5, 6},\, \Delta_{2, 5, 7},\, \Delta_{2, 6},\, \Delta_{2, 6, 7},\, \Delta_{2, 7},\, \Delta_{3, 4},\, \Delta_{3, 4, 5},\, \Delta_{3, 4, 6},\, \Delta_{3, 4, 7},\\
& \Delta_{3, 5},\, \Delta_{3, 5, 6},\, \Delta_{3, 5, 7},\, \Delta_{3, 6},\, \Delta_{3, 6, 7},\, \Delta_{3, 7},\, \Delta_{4, 5},\, \Delta_{4, 5, 6},\, \Delta_{4, 5, 7},\, \Delta_{4, 6},\\
& \Delta_{4, 6, 7},\, \Delta_{4, 7},\, \Delta_{5, 6},\, \Delta_{5, 6, 7},\, \Delta_{5, 7},\, \Delta_{6, 7}.
\end{split}
\] 
The subspace $\boldsymbol{V}_7$ has dimension 14 and the $F$-nef cone $\mathcal{F}_7$ is defined by 350 linear inequalities. In this case, we could not compute all cones of the filtration \eqref{eq:filtrationE}. In reasonable time, we can obtain the H-representation of the first eight cones in the filtration. In Table \ref{tab:incremental_N7}\subref{tab:filtrationN7}, there is the description of the cones obtained from the subset of a basis of $\boldsymbol{V}_7$ composed by vectors 
\[
\begin{split}
 \bV^{(1)} ={}& \Delta_{1, 2}+\Delta_{1, 2, 5}+\Delta_{1, 2, 6}+\Delta_{1, 2, 7}-\Delta_{1, 3}-\Delta_{1, 3, 5}-\Delta_{1, 3, 6}-\Delta_{1, 3, 7}\\
 			&-\Delta_{2, 4}-\Delta_{2, 4, 5}-\Delta_{2, 4, 6}-\Delta_{2, 4, 7}+\Delta_{3, 4}+\Delta_{3, 4, 5}+\Delta_{3, 4, 6}+\Delta_{3, 4, 7},\\
\bV^{(2)} ={} & \Delta_{1, 2}+\Delta_{1, 2, 5}+\Delta_{1, 2, 6}+\Delta_{1, 2, 7}-\Delta_{1, 4}-\Delta_{1, 4, 5}-\Delta_{1, 4, 6}-\Delta_{1, 4, 7}\\
			&-\Delta_{2, 3}-\Delta_{2, 3, 5}-\Delta_{2, 3, 6}-\Delta_{2, 3, 7}+\Delta_{3, 4}+\Delta_{3, 4, 5}+\Delta_{3, 4, 6}+\Delta_{3, 4, 7},\\
\end{split}
\]
\[
\begin{split}
\bV^{(3)} ={} & \Delta_{1, 2}+\Delta_{1, 2, 4}+\Delta_{1, 2, 6}+\Delta_{1, 2, 7}-\Delta_{1, 3}-\Delta_{1, 3, 4}-\Delta_{1, 3, 6}-\Delta_{1, 3, 7}\\
			&-\Delta_{2, 4, 5}-\Delta_{2, 5}-\Delta_{2, 5, 6}-\Delta_{2, 5, 7}+\Delta_{3, 4, 5}+\Delta_{3, 5}+\Delta_{3, 5, 6}+\Delta_{3, 5, 7},\\
\bV^{(4)} ={} & \Delta_{1, 2}+\Delta_{1, 2, 4}+\Delta_{1, 2, 6}+\Delta_{1, 2, 7}-\Delta_{1, 4, 5}-\Delta_{1, 5}-\Delta_{1, 5, 6}-\Delta_{1, 5, 7}\\
			&-\Delta_{2, 3}-\Delta_{2, 3, 4}-\Delta_{2, 3, 6}-\Delta_{2, 3, 7}+\Delta_{3, 4, 5}+\Delta_{3, 5}+\Delta_{3, 5, 6}+\Delta_{3, 5, 7},\\
\bV^{(5)} ={} & \Delta_{1, 2}+\Delta_{1, 2, 4}+\Delta_{1, 2, 5}+\Delta_{1, 2, 7}-\Delta_{1, 3}-\Delta_{1, 3, 4}-\Delta_{1, 3, 5}-\Delta_{1, 3, 7}\\
			&-\Delta_{2, 4, 6}-\Delta_{2, 5, 6}-\Delta_{2, 6}-\Delta_{2, 6, 7}+\Delta_{3, 4, 6}+\Delta_{3, 5, 6}+\Delta_{3, 6}+\Delta_{3, 6, 7},\\
\bV^{(6)} ={} & \Delta_{1, 2}+\Delta_{1, 2, 4}+\Delta_{1, 2, 5}+\Delta_{1, 2, 7}-\Delta_{1, 4, 6}-\Delta_{1, 5, 6}-\Delta_{1, 6}-\Delta_{1, 6, 7}\\
			&-\Delta_{2, 3}-\Delta_{2, 3, 4}-\Delta_{2, 3, 5}-\Delta_{2, 3, 7}+\Delta_{3, 4, 6}+\Delta_{3, 5, 6}+\Delta_{3, 6}+\Delta_{3, 6, 7},\\
\bV^{(7)} ={} & \Delta_{1, 2}+\Delta_{1, 2, 4}+\Delta_{1, 2, 5}+\Delta_{1, 2, 6}-\Delta_{1, 3}-\Delta_{1, 3, 4}-\Delta_{1, 3, 5}-\Delta_{1, 3, 6}\\
&-\Delta_{2, 4, 7}-\Delta_{2, 5, 7}-\Delta_{2, 6, 7}-\Delta_{2, 7}+\Delta_{3, 4, 7}+\Delta_{3, 5, 7}+\Delta_{3, 6, 7}+\Delta_{3, 7}.
\end{split}
\]
Notice that the cone $\mathcal{E}_7^{(7)}$ is defined by 99281 inequalities and the index of containment is still very large, namely only 32 inequalities of the H-representation of $\mathcal{E}_7^{(7)}$ are satisfied by all elements of $\mathcal{F}_7$. 

Thus, we consider the space $\boldsymbol{U} \simeq \Pic(\overline{M}_{0,7})_{\QQ}$ spanned by the classes of boundary components $\mathscr{B}_7$
\begin{align*}
& \delta_{1, 2, 5}, && \delta_{1, 2, 6}, && \delta_{1, 2, 7}, && \delta_{1, 3, 4}, && \delta_{1, 3, 6}, && \delta_{1, 3, 7}, && \delta_{1, 4}, \\
& \delta_{1, 4, 5}, && \delta_{1, 4, 6}, && \delta_{1, 4, 7}, && \delta_{1, 5}, && \delta_{1, 5, 6}, && \delta_{1, 5, 7}, && \delta_{1, 6}, \\
& \delta_{1, 6, 7}, && \delta_{1, 7}, && \delta_{2, 3}, && \delta_{2, 3, 4}, && \delta_{2, 3, 5}, && \delta_{2, 3, 6}, && \delta_{2, 3, 7},\\
& \delta_{2, 4, 5}, && \delta_{2, 4, 7}, && \delta_{2, 5}, && \delta_{2, 5, 6}, && \delta_{2, 5, 7}, && \delta_{2, 6}, && \delta_{2, 6, 7},\\
& \delta_{2, 7}, && \delta_{3, 4}, && \delta_{3, 4, 5}, && \delta_{3, 4, 6}, && \delta_{3, 4, 7}, && \delta_{3, 5, 6}, && \delta_{3, 6},\\
& \delta_{3, 6, 7}, && \delta_{3, 7}, && \delta_{4, 5}, && \delta_{4, 5, 6}, && \delta_{4, 5, 7}, && \delta_{4, 7}, && \delta_{5, 6}.
\end{align*}
The index of containment $\Gamma(\overline{\mathcal{F}_7},\overline{\mathcal{E}_7}^{(0)})$ is equal to $7$ and the inequalities not satisfied are $w_{1, 4, 5} \geq 0$, $w_{1, 4, 7} \geq 0$, $w_{1, 5, 6} \geq 0$, $w_{2, 3, 6} \geq 0$, $w_{2, 3, 7} \geq 0$, $w_{2, 5, 6} \geq 0$ and $w_{3, 4, 7} \geq 0$. Now, we construct the filtration \eqref{eq:filtrationBar} adding at each step the vector $\delta_S \notin \mathscr{B}_7$ that enlarges the cone $\overline{\mathcal{E}_7}^{(i)}$ mostly. For instance, the vectors $\delta_S \notin \mathscr{B}_7$ lie outside 1 or 3 half-spaces among the 7 listed above, so we pick a vector among those enlarging the positive orthant in three of the needed directions. This procedure leads to add the following vectors (in this order)
\[
\begin{split}
\delta_{1, 2, 3} ={}& \delta_{1, 2, 5}+\delta_{1, 3, 4}-\delta_{1, 4, 5}-\delta_{2, 3}-\delta_{2, 3, 6}-\delta_{2, 3, 7}+\delta_{2, 5}+\delta_{2, 5, 6}\\
			&+\delta_{2, 5, 7}+\delta_{3, 4}+\delta_{3, 4, 6}+\delta_{3, 4, 7}-\delta_{4, 5}-\delta_{4, 5, 6}-\delta_{4, 5, 7},\\
\delta_{4, 6, 7} ={}& -\delta_{1, 2, 5}+\delta_{1, 2, 7}+\delta_{1, 4, 5}-\delta_{1, 4, 7}-\delta_{2, 3, 5}+\delta_{2, 3, 7}-\delta_{2, 5}-\delta_{2, 5, 6}\\
			&+\delta_{2, 6, 7}+\delta_{2, 7}+\delta_{3, 4, 5}-\delta_{3, 4, 7}+\delta_{4, 5}+\delta_{4, 5, 6}-\delta_{4, 7},\\
 \delta_{1, 3, 5} ={}& \delta_{1, 3, 7}-\delta_{1, 4, 5}+\delta_{1, 4, 7}-\delta_{1, 5}-\delta_{1, 5, 6}+\delta_{1, 6, 7}+\delta_{1, 7}+\delta_{2, 3, 5}\\
 			&-\delta_{2, 3, 7}+\delta_{2, 4, 5}-\delta_{2, 4, 7}+\delta_{2, 5}+\delta_{2, 5, 6}-\delta_{2, 6, 7}-\delta_{2, 7},\\
			 \delta_{2, 4, 6} ={}& \delta_{1, 3, 6}-\delta_{1, 3, 7}+\delta_{1, 4, 6}-\delta_{1, 4, 7}+\delta_{1, 5, 6}-\delta_{1, 5, 7}+\delta_{1, 6}-\delta_{1, 7}\\
 			&-\delta_{2, 3, 6}+\delta_{2, 3, 7}+\delta_{2, 4, 7}-\delta_{2, 5, 6}+\delta_{2, 5, 7}-\delta_{2, 6}+\delta_{2, 7},\\
 \delta_{3, 5, 7} ={}&  -\delta_{1, 2, 6}+\delta_{1, 2, 7}-\delta_{1, 4, 6}+\delta_{1, 4, 7}-\delta_{1, 5, 6}+\delta_{1, 5, 7}-\delta_{1, 6}+\delta_{1, 7}\\
 			&+\delta_{2, 3, 6}-\delta_{2, 3, 7}+\delta_{3, 4, 6}-\delta_{3, 4, 7}+\delta_{3, 5, 6}+\delta_{3, 6}-\delta_{3, 7}.
 \end{split}
\]
In particular, $\delta_{1, 2, 3}$ enlarges 3 of the 7 half-spaces of $\overline{\mathcal{E}_7}^{(0)}$ not containing $\overline{\mathcal{F}_7}$, $\delta_{4, 6, 7}$ enlarges 8 of the 14 half-spaces of $\overline{\mathcal{E}_7}^{(1)}$ not containing $\overline{\mathcal{F}_7}$, $\delta_{1, 3, 5}$ enlarges 13 of the 16 half-spaces of $\overline{\mathcal{E}_7}^{(2)}$ not containing $\overline{\mathcal{F}_7}$, $\delta_{2, 4, 6}$ enlarges 6 of the 8 half-spaces of $\overline{\mathcal{E}_7}^{(3)}$ not containing $\overline{\mathcal{F}_7}$ and $\delta_{3, 5, 7}$ enlarges all 4 half-spaces of $\overline{\mathcal{E}_7}^{(4)}$ not containing $\overline{\mathcal{F}_7}$. Finally, the cone $\overline{\mathcal{E}_7}^{(5)}$ contains the $F$-nef cone $\overline{\mathcal{F}_7}$.

\begin{Theorem}\label{prop:case7formC}
The cone $\overline{\mathcal{F}_7}$ is contained in the cone $\overline{\mathcal{E}_7}$.
\end{Theorem}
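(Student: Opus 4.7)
The plan is to execute the filtration strategy developed in Section~2 in the quotient space $\Pic(\overline{M}_{0,7})_{\QQ}$, using the basis $\mathscr{B}_7$ as the distinguished subspace $\boldsymbol{U}$. The starting point is the cone $\overline{\mathcal{E}_7}^{(0)} := \QQp\langle \delta_S\mid S\in \mathfrak{U}\rangle$, the positive orthant of $\boldsymbol{U}$, whose H-representation is trivial (the $42$ half-spaces $w_S\geq 0$). The first task is to compute the index of containment $\Gamma(\overline{\mathcal{F}_7},\overline{\mathcal{E}_7}^{(0)})$ by solving the $42$ linear programs $\min\{w_S : \bA\in\overline{\mathcal{F}_7}\}$: the output, which the preceding discussion records, is that exactly $7$ of these minima equal $-\infty$, corresponding to the coordinates $w_{1,4,5}, w_{1,4,7}, w_{1,5,6}, w_{2,3,6}, w_{2,3,7}, w_{2,5,6}, w_{3,4,7}$.

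Next I would build the filtration $\overline{\mathcal{E}_7}^{(0)}\subset\overline{\mathcal{E}_7}^{(1)}\subset\cdots\subset\overline{\mathcal{E}_7}^{(5)}$ by adjoining, in the greedy order prescribed at the end of Section~2, the rays $\delta_{1,2,3}, \delta_{4,6,7}, \delta_{1,3,5}, \delta_{2,4,6}, \delta_{3,5,7}$ (written in the basis $\mathscr{B}_7$ via the explicit expressions listed just above the theorem). At each step I would update the H-representation using \texttt{polymake}'s \texttt{minkowski\_sum} routine (equivalently, the incremental elimination procedure described in Section~2 applied to a single new ray at a time). After each step I would recompute the index of containment $\Gamma(\overline{\mathcal{F}_7},\overline{\mathcal{E}_7}^{(i)})$ via \texttt{lpSolve}, checking that the counts drop as predicted ($7\to 14\to 16\to 8\to 4\to 0$; note that some intermediate indices actually increase because new facets of the enlarged cone can cut $\overline{\mathcal{F}_7}$ in new ways, even while the originally offending facets are killed).

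The key certification step is the last one: verify that $\Gamma(\overline{\mathcal{F}_7},\overline{\mathcal{E}_7}^{(5)}) = 0$, i.e., that for every facet inequality $H_j\geq 0$ in the computed H-representation of $\overline{\mathcal{E}_7}^{(5)}$, the linear program $\min\{H_j([\bA]) : \bA\in \overline{\mathcal{F}_7}\}$ is bounded below by $0$. Since $\overline{\mathcal{F}_7}$ is known by its $350$ H-inequalities (restricted to $\boldsymbol{U}$ by setting $w_S=0$ for $S\notin\mathfrak{U}$), these are all finite-dimensional linear programs on a rational polyhedron, which \texttt{lpSolve} handles exactly. Once every such minimum is shown to be $\geq 0$, we conclude $\overline{\mathcal{F}_7}\subset \overline{\mathcal{E}_7}^{(5)} \subset \overline{\mathcal{E}_7}$, as required.

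The main obstacle is computational rather than conceptual: the H-representations of the intermediate cones grow quickly (as illustrated by the $99281$-inequality figure for $\mathcal{E}_7^{(7)}$ in the ambient space $\boldsymbol{W}_7$), so the whole argument hinges on two design choices justified in Section~2 — passing to the quotient $\boldsymbol{U}\simeq\Pic(\overline{M}_{0,7})_{\QQ}$ spanned by $\mathscr{B}_7$, which minimises $\Gamma(\overline{\mathcal{F}_7},\overline{\mathcal{E}_7}^{(0)})$, and greedily choosing the next ray $\delta_S$ to maximise the number of currently violated facet inequalities it turns into interior inequalities. The proposal is to document that after these five carefully chosen enlargements, redundancy removal and LP verification finish in reasonable time and yield the desired inclusion, so that Conjecture~\ref{main} for $n=7$ follows from the convex-geometry formulation~C.
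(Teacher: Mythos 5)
Your proposal follows exactly the route the paper takes: pass to the quotient spanned by $\mathscr{B}_7$, verify $\Gamma(\overline{\mathcal{F}_7},\overline{\mathcal{E}_7}^{(0)})=7$ by linear programming, greedily adjoin $\delta_{1,2,3},\delta_{4,6,7},\delta_{1,3,5},\delta_{2,4,6},\delta_{3,5,7}$ while recomputing H-representations and indices of containment ($7\to14\to16\to8\to4\to0$, matching Table~\ref{tab:incremental_N7}), and certify $\overline{\mathcal{F}_7}\subset\overline{\mathcal{E}_7}^{(5)}\subset\overline{\mathcal{E}_7}$ by checking every facet LP is bounded below by $0$. This is correct and is essentially the same computer-assisted argument the authors give.
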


%\begin{Theorem}[Convex geometry formulation C]\label{prop:case7formC}
%The cone $\overline{\mathcal{F}_7}$ is contained in the cone $\overline{\mathcal{E}_7}$.
%\end{Theorem}
\bigskip

\begin{table}[!ht]
\begin{center}
\subfloat[][The first 8 steps of the filtration \eqref{eq:filtrationE} in the case $n=7$.]
{
\label{tab:filtrationN7}
\begin{tikzpicture}[scale=0.85]
\draw (-4,0) -- (-4,-3);
\draw (0,1) -- (0,-3);
\draw (1.5,1) -- (1.5,-3);
\draw (3,1) -- (3,-3);
\draw (4.5,1) -- (4.5,-3);
\draw (6,1) -- (6,-3);
\draw (7.5,1) -- (7.5,-3);
\draw (9,1) -- (9,-3);
\draw (10.5,1) -- (10.5,-3);
\draw (12,1) -- (12,-3);

\draw (0,1) -- (12,1);
\draw (-4,0) -- (12,0);
\draw (-4,-1) -- (12,-1);
\draw (-4,-2) -- (12,-2);
\draw (-4,-3) -- (12,-3);

\node at (-2,-0.5) [] {\small number of facets};
\node at (-2,-1.5) [] {\small $\Gamma\big(\mathcal{F}_7,\mathcal{E}_7^{(i)}\big)$};
\node at (-2,-2.5) [] {\small computing time\footnotemark[3]};

\node at (0.75,0.5) [] {\small $\mathcal{E}_7^{(0)}$};
\node at (2.25,0.5) [] {\small $\mathcal{E}_7^{(1)}$};
\node at (3.75,0.5) [] {\small $\mathcal{E}_7^{(2)}$};
\node at (5.25,0.5) [] {\small $\mathcal{E}_7^{(3)}$};
\node at (6.75,0.5) [] {\small $\mathcal{E}_7^{(4)}$};
\node at (8.25,0.5) [] {\small $\mathcal{E}_7^{(5)}$};
\node at (9.75,0.5) [] {\small $\mathcal{E}_7^{(6)}$};
\node at (11.25,0.5) [] {\small $\mathcal{E}_7^{(7)}$};
%\node at (8.5,0.5) [] {\small $\mathcal{E}_6^{(8)}$};
%\node at (9.5,0.5) [] {\small $\mathcal{E}_6^{(9)}$};

\node at (0.75,-0.5) [] {\footnotesize 56};
\node at (2.25,-0.5) [] {\footnotesize 104};
\node at (3.75,-0.5) [] {\footnotesize 544};
\node at (5.25,-0.5) [] {\footnotesize 1320};
\node at (6.75,-0.5) [] {\footnotesize 4052};
\node at (8.25,-0.5) [] {\footnotesize 12276};
\node at (9.75,-0.5) [] {\footnotesize 28966};
\node at (11.25,-0.5) [] {\footnotesize 99281};

\node at (0.75,-1.5) [] {\footnotesize 56};
\node at (2.25,-1.5) [] {\footnotesize 104};
\node at (3.75,-1.5) [] {\footnotesize 544};
\node at (5.25,-1.5) [] {\footnotesize 1320};
\node at (6.75,-1.5) [] {\footnotesize 4052};
\node at (8.25,-1.5) [] {\footnotesize 12276};
\node at (9.75,-1.5) [] {\footnotesize 28966};
\node at (11.25,-1.5) [] {\footnotesize 99249};

\node at (0.75,-2.5) [] {\footnotesize };
\node at (2.25,-2.5) [] {\footnotesize 3.6s};
\node at (3.75,-2.5) [] {\footnotesize 4.1s};
\node at (5.25,-2.5) [] {\footnotesize 5.7s};
\node at (6.75,-2.5) [] {\footnotesize 14.1s};
\node at (8.25,-2.5) [] {\footnotesize 45.7s};
\node at (9.75,-2.5) [] {\footnotesize 153.3s};
\node at (11.25,-2.5) [] {\footnotesize 2034.4s};

\end{tikzpicture}
}

\subfloat[][The first 8 steps of the filtration \eqref{eq:filtrationBar} in the case $n=7$.]
{
\label{tab:filtrationN7bar}
\begin{tikzpicture}[scale=0.85]
\draw (-4,0) -- (-4,-3);
\draw (0,1) -- (0,-3);
\draw (1.5,1) -- (1.5,-3);
\draw (3,1) -- (3,-3);
\draw (4.5,1) -- (4.5,-3);
\draw (6,1) -- (6,-3);
\draw (7.5,1) -- (7.5,-3);
\draw (9,1) -- (9,-3);
\draw (10.5,1) -- (10.5,-3);
\draw (12,1) -- (12,-3);

\draw (0,1) -- (12,1);
\draw (-4,0) -- (12,0);
\draw (-4,-1) -- (12,-1);
\draw (-4,-2) -- (12,-2);
\draw (-4,-3) -- (12,-3);

\node at (-2,-0.5) [] {\small number of facets};
\node at (-2,-1.5) [] {\small $\Gamma\big(\overline{\mathcal{F}_7},\overline{\mathcal{E}_7}^{(i)}\big)$};
\node at (-2,-2.5) [] {\small computing time\footnotemark[3]};

\node at (0.75,0.5) [] {\small $\overline{\mathcal{E}_7}^{(0)}$};
\node at (2.25,0.5) [] {\small $\overline{\mathcal{E}_7}^{(1)}$};
\node at (3.75,0.5) [] {\small $\overline{\mathcal{E}_7}^{(2)}$};
\node at (5.25,0.5) [] {\small $\overline{\mathcal{E}_7}^{(3)}$};
\node at (6.75,0.5) [] {\small $\overline{\mathcal{E}_7}^{(4)}$};
\node at (8.25,0.5) [] {\small $\overline{\mathcal{E}_7}^{(5)}$};
\node at (9.75,0.5) [] {\small $\overline{\mathcal{E}_7}^{(6)}$};
\node at (11.25,0.5) [] {\small $\overline{\mathcal{E}_7}^{(7)}$};
%\node at (8.5,0.5) [] {\small $\mathcal{E}_6^{(8)}$};
%\node at (9.5,0.5) [] {\small $\mathcal{E}_6^{(9)}$};

\node at (0.75,-0.5) [] {\footnotesize 42};
\node at (2.25,-0.5) [] {\footnotesize 91};
\node at (3.75,-0.5) [] {\footnotesize 196};
\node at (5.25,-0.5) [] {\footnotesize 477};
\node at (6.75,-0.5) [] {\footnotesize 1433};
\node at (8.25,-0.5) [] {\footnotesize 5753};
\node at (9.75,-0.5) [] {\footnotesize 22996};
\node at (11.25,-0.5) [] {\footnotesize 69929};

\node at (0.75,-1.5) [] {\footnotesize 7};
\node at (2.25,-1.5) [] {\footnotesize 14};
\node at (3.75,-1.5) [] {\footnotesize 16};
\node at (5.25,-1.5) [] {\footnotesize 8};
\node at (6.75,-1.5) [] {\footnotesize 4};
\node at (8.25,-1.5) [] {\footnotesize 0};
\node at (9.75,-1.5) [] {\footnotesize 0};
\node at (11.25,-1.5) [] {\footnotesize 0};

\node at (0.75,-2.5) [] {\footnotesize };
\node at (2.25,-2.5) [] {\footnotesize 2.7s};
\node at (3.75,-2.5) [] {\footnotesize 2.8s};
\node at (5.25,-2.5) [] {\footnotesize 3.4s};
\node at (6.75,-2.5) [] {\footnotesize 5.3s};
\node at (8.25,-2.5) [] {\footnotesize 15.9s};
\node at (9.75,-2.5) [] {\footnotesize 98.2s};
\node at (11.25,-2.5) [] {\footnotesize 1568.4s};

\end{tikzpicture}
}
\caption{\label{tab:incremental_N7} The initial part of the filtrations of cones $\mathcal{E}_7^{(i)}$ in $\boldsymbol{W}_7$ and $\overline{\mathcal{E}_7}^{(i)}$ in $\boldsymbol{U} = \QQ\langle\mathscr{B}_7\rangle$.}
\end{center}
\end{table}

\footnotetext[3]{The computation has been run on a MacBook Pro with an Intel Core 2 Duo 2.4 GHz processor.}	

\subsection{\texorpdfstring{Case $n=6$}{Case n = 6}}
The vector space $\boldsymbol{W}_6$ is generated by the boundary components
\[
\begin{split}
&\Delta_{1, 2},\, \Delta_{1, 2, 3},\, \Delta_{1, 2, 4},\, \Delta_{1, 2, 5},\, \Delta_{1, 2, 6},\, \Delta_{1, 3},\,
      \Delta_{1, 3, 4},\, \Delta_{1, 3, 5},\\
& \Delta_{1, 3, 6},\, \Delta_{1, 4},\, \Delta_{1, 4, 5},\, \Delta_{1, 4, 6},\, \Delta_{1, 5},\, \Delta_{1, 5, 6},\, \Delta_{1, 6},\, \Delta_{2, 3},\\
& \Delta_{2, 4},\, \Delta_{2, 5},\, \Delta_{2, 6},\, \Delta_{3, 4},\, \Delta_{3, 5},\, \Delta_{3, 6},\, \Delta_{4, 5},\, \Delta_{4, 6},\, \Delta_{5, 6},
\end{split}
\]
the subspace $\boldsymbol{V}_6$ is spanned by 
\[
\begin{split}
&\bV^{(1)} = \Delta_{1, 2}+\Delta_{1, 2, 5}+\Delta_{1, 2, 6}-\Delta_{1, 3}-\Delta_{1, 3, 5}-\Delta_{1, 3, 6}-\Delta_{2, 4}+\Delta_{3, 4},\\
&\bV^{(2)} = \Delta_{1, 2}+\Delta_{1, 2, 5}+\Delta_{1, 2, 6}-\Delta_{1, 4}-\Delta_{1, 4, 5}-\Delta_{1, 4, 6}-\Delta_{2, 3}+\Delta_{3, 4},\\
&\bV^{(3)} = \Delta_{1, 2}+\Delta_{1, 2, 4}+\Delta_{1, 2, 6}-\Delta_{1, 3}-\Delta_{1, 3, 4}-\Delta_{1, 3, 6}-\Delta_{2, 5}+\Delta_{3, 5},\\
&\bV^{(4)} = \Delta_{1, 2}+\Delta_{1, 2, 4}+\Delta_{1, 2, 6}-\Delta_{1, 4, 5}-\Delta_{1, 5}-\Delta_{1, 5, 6}-\Delta_{2, 3}+\Delta_{3, 5},\\
\end{split}
\]
\[
\begin{split}
&\bV^{(5)} =  \Delta_{1, 2}+\Delta_{1, 2, 4}+\Delta_{1, 2, 5}-\Delta_{1, 3}-\Delta_{1, 3, 4}-\Delta_{1, 3, 5}-\Delta_{2, 6}+\Delta_{3, 6},\\
&\bV^{(6)} =  \Delta_{1, 2}+\Delta_{1, 2, 4}+\Delta_{1, 2, 5}-\Delta_{1, 4, 6}-\Delta_{1, 5, 6}-\Delta_{1, 6}-\Delta_{2, 3}+\Delta_{3, 6},\\
&\bV^{(7)} = \Delta_{1, 2}+\Delta_{1, 2, 3}+\Delta_{1, 2, 6}-\Delta_{1, 3, 4}-\Delta_{1, 4}-\Delta_{1, 4, 6}-\Delta_{2, 5}+\Delta_{4, 5},\\
&\bV^{(8)} =  \Delta_{1, 2}+\Delta_{1, 2, 3}+\Delta_{1, 2, 5}-\Delta_{1, 3, 4}-\Delta_{1, 4}-\Delta_{1, 4, 5}-\Delta_{2, 6}+\Delta_{4, 6},\\
&\bV^{(9)} = \Delta_{1, 2}+\Delta_{1, 2, 3}+\Delta_{1, 2, 4}-\Delta_{1, 3, 5}-\Delta_{1, 4, 5}-\Delta_{1, 5}-\Delta_{2, 6}+\Delta_{5, 6},
\end{split}
\]
and the H-representation of the $F$-nef cone $\mathcal{F}_6$ consists of 65 half-spaces. In this case, we can compute the entire filtration 
\[
\mathcal{O}_6 = \mathcal{E}_6^{(0)} \subset \mathcal{E}_6^{(1)} \subset \quad \cdots\quad\subset \mathcal{E}_6^{(8)} \subset \mathcal{E}_6^{(9)} = \mathcal{E}_6
\]
(see Table \ref{tab:incremental_N6}\subref{tab:filtrationN6} for details). Looking at the indices of containment $\Gamma(\mathcal{F}_6,\mathcal{E}_6^{(i)})$, we notice that the first half-space defining a cone $\mathcal{E}_6^{(i)}$ and containing the entire $F$-nef cone appears in the fifth step of the enlargement and that the whole cone $\mathcal{E}_6$ is the unique cone in the filtration containing the cone $\mathcal{F}_6$. 

\begin{Proposition}\label{prop:case6formB}
The cone $\mathcal{F}_6$ is contained in the cone $\mathcal{E}_6$.
\end{Proposition}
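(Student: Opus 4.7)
My plan is to establish the inclusion $\mathcal{F}_6 \subseteq \mathcal{E}_6$ by explicitly computing the H-representation of $\mathcal{E}_6$ and then verifying, inequality by inequality, that each of its defining half-spaces contains all of $\mathcal{F}_6$. The key advantage over the case $n=7$ is that here $\dim\boldsymbol{V}_6 = 9$ and $\dim\boldsymbol{W}_6 = 25$, which keeps the complexity of the full filtration \eqref{eq:filtrationE} within reach of polyhedral software.

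First, I would fix the basis $\{\bV^{(1)},\ldots,\bV^{(9)}\}$ of $\boldsymbol{V}_6$ listed above and build the filtration
\[
\mathcal{O}_6 = \mathcal{E}_6^{(0)} \subset \mathcal{E}_6^{(1)} \subset \cdots \subset \mathcal{E}_6^{(9)} = \mathcal{E}_6
\]
step by step. At each step I would obtain the H-representation of $\mathcal{E}_6^{(i)} = \mathcal{E}_6^{(i-1)} + \QQ\langle\bV^{(i)}\rangle$ from that of $\mathcal{E}_6^{(i-1)}$ via the parameter-elimination procedure described in Section~2: split the $h_{i-1}$ facet forms $H_j^{(i-1)}$ according to the sign of the coefficient of the auxiliary variable $t$ in $H_j^{(i-1)}([\bW]+t[\bV^{(i)}])$, form the $h_i^0 + h_i^+\cdot h_i^-$ candidate inequalities, and then pass from this candidate family to the actual facets by invoking the \texttt{minkowski\_sum} routine of \texttt{polymake} for redundancy removal.

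Second, having produced the H-representation of each $\mathcal{E}_6^{(i)}$, I would compute $\Gamma\bigl(\mathcal{F}_6,\mathcal{E}_6^{(i)}\bigr)$ by running, for every facet inequality $H_j^{(i)}\geqslant 0$ of $\mathcal{E}_6^{(i)}$, a linear program through \texttt{lpSolve} that minimizes $H_j^{(i)}([\bA])$ subject to the $65$ half-space constraints defining $\mathcal{F}_6$. The inequality is violated somewhere on $\mathcal{F}_6$ precisely when this optimum equals $-\infty$, so the resulting integer records exactly how many facets of $\mathcal{E}_6^{(i)}$ still cut off a portion of $\mathcal{F}_6$. The proposition is then equivalent to the numerical assertion $\Gamma(\mathcal{F}_6,\mathcal{E}_6) = 0$.

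The main obstacle I expect is the worst-case quadratic blow-up $h_i \leadsto h_i^2/4$ in the number of candidate half-spaces at each step: even starting from $h_0 = 25$, a single iteration without redundancy removal could already generate on the order of $150$ forms, and after nine iterations a naive enumeration would be unmanageable. In practice, however, the facet-extraction step keeps each $h_i$ modest for $n=6$, so that the entire filtration terminates and the final linear programming pass certifies $\Gamma(\mathcal{F}_6,\mathcal{E}_6) = 0$. A mild secondary difficulty is that, empirically, none of the intermediate cones $\mathcal{E}_6^{(i)}$ with $i<9$ already contains $\mathcal{F}_6$, so there is no shortcut: the complete filtration really must be computed.
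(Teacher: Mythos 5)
Your proposal matches the paper's own argument: the authors likewise compute the full filtration $\mathcal{E}_6^{(0)}\subset\cdots\subset\mathcal{E}_6^{(9)}=\mathcal{E}_6$ in $\boldsymbol{W}_6$ with \texttt{polymake}'s \texttt{minkowski\_sum}, then certify $\Gamma(\mathcal{F}_6,\mathcal{E}_6)=0$ by linear programming against the $65$ inequalities of $\mathcal{F}_6$, and they too observe that $\mathcal{E}_6$ is the only cone in the filtration containing $\mathcal{F}_6$, so no shortcut exists. Your reasoning and computational plan are correct and essentially identical to theirs.
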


%\begin{Proposition}[Convex geometry formulation B]\label{prop:case6formB}
%The cone $\mathcal{F}_6$ is contained in the cone $\mathcal{E}_6$.
%\end{Proposition}

The basis $\mathscr{B}_6$ consists of the classes of boundary divisors
\begin{align*}
&\delta_{1, 2, 5}, && \delta_{1, 2, 6},&& \delta_{1, 3, 4},&& \delta_{1, 3, 6},&& \delta_{1, 4},&& \delta_{1, 4, 5},&& \delta_{1, 4, 6},&& \delta_{1, 5},\\
&\delta_{1, 5, 6},&& \delta_{1, 6},&& \delta_{2, 3},&& \delta_{2, 5},&& \delta_{2, 6},&&
      \delta_{3, 4},&& \delta_{3, 6},&& \delta_{4, 5},
\end{align*}
and the remaining 9 classes can be written as follows
\[
\begin{split}
 &\delta_{1, 2} = -\delta_{1, 2, 5}-\delta_{1, 2, 6}+\delta_{1, 4}+\delta_{1, 4, 5}+\delta_{1, 4, 6}+\delta_{2, 3}-\delta_{3, 4},\\
&\delta_{1, 2, 3} = \delta_{1, 2, 5}+\delta_{1, 3, 4}-\delta_{1, 4, 5}-\delta_{2, 3}+\delta_{2, 5}+\delta_{3, 4}-\delta_{4, 5},\\
&\delta_{1, 2, 4} = \delta_{1, 2, 6}-\delta_{1, 4}-\delta_{1, 4, 5}+\delta_{1, 5, 6}+\delta_{1, 6}+\delta_{3, 4}-\delta_{3, 6},\\
& \delta_{1, 3} = -\delta_{1, 3, 4}-\delta_{1, 3, 6}+\delta_{1, 4, 5}+\delta_{1, 5}+\delta_{1, 5, 6}+\delta_{2, 3}-\delta_{2, 5},\\
& \delta_{1, 3, 5} = \delta_{1, 3, 6}-\delta_{1, 4, 5}+\delta_{1, 4, 6}-\delta_{1, 5}+\delta_{1, 6}+\delta_{2, 5}-\delta_{2, 6},\\
& \delta_{2, 4} = \delta_{1, 3, 4}-\delta_{1, 3, 6}+\delta_{1, 4}+\delta_{1, 4, 5}-\delta_{1, 5, 6}-\delta_{1, 6}+\delta_{2, 6},\\
& \delta_{3, 5} = \delta_{1, 2, 5}-\delta_{1, 2, 6}+\delta_{1, 4, 5}-\delta_{1, 4, 6}+\delta_{1, 5}-\delta_{1, 6}+\delta_{3, 6},\\
& \delta_{4, 6} = -\delta_{1, 2, 5}+\delta_{1, 2, 6}+\delta_{1, 4, 5}-\delta_{1, 4, 6}-\delta_{2, 5}+\delta_{2, 6}+\delta_{4, 5},\\
&  \delta_{5, 6} = -\delta_{1, 3, 4}+\delta_{1, 3, 6}+\delta_{1, 4, 5}-\delta_{1, 5, 6}-\delta_{3, 4}+\delta_{3, 6}+\delta_{4, 5}.
\end{split}
\]
The index of containment $\Gamma(\overline{\mathcal{F}_6},\overline{\mathcal{E}_6}^{(0)})$ is equal to 1 and the inequality not satisfied by all elements of $\overline{\mathcal{F}_6}$ is $w_{1, 4, 5} \geqslant 0$. Hence, we start enlarging the positive orthant of $\boldsymbol{U}$ by adding a vector $\delta_S \notin \mathscr{B}_6$ such that $w_{1, 4, 5}(\delta_S) < 0$. There are three possible choices, $\delta_{1, 2, 3}$, $\delta_{1, 2, 4}$ and $\delta_{1, 3, 5}$, and all of them produce a cone $\overline{\mathcal{E}_6}^{(1)} = \QQp\langle\mathscr{B}_6\rangle + \QQp\langle\delta_S\rangle$ with 25 facets containing the $F$-nef cone $\overline{\mathcal{F}_6}$ (see Table \ref{tab:incremental_N6}\subref{tab:filtrationBar}).

\begin{Proposition}\label{prop:case6formC}
The cone $\overline{\mathcal{F}_6}$ is contained in the cone $\overline{\mathcal{E}_6}$.
\end{Proposition}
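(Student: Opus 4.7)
The plan is to invoke Convex Geometry Formulation C and exploit the incremental filtration \eqref{eq:filtrationBar} with the basis $\mathscr{B}_6$ as the starting subspace $\boldsymbol{U}\simeq\Pic(\overline{M}_{0,6})_{\QQ}$. Since $\overline{\mathcal{E}_6}^{(0)}=\QQp\langle\mathscr{B}_6\rangle$ is the positive orthant in $\boldsymbol{U}$, we first need to measure how far $\overline{\mathcal{F}_6}$ protrudes outside it. To do this, I would feed the 65 half-space H-representation of $\mathcal{F}_6$ (and the constraints $w_S = 0$ for $S\in\mathfrak{S}\setminus\mathfrak{U}$, so that we work inside $\boldsymbol{U}$) into a linear programming routine and, for each of the $\overline{N}=16$ coordinate inequalities $w_S\geqslant 0$, minimize $w_S$ over $\overline{\mathcal{F}_6}$. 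This computes $\Gamma(\overline{\mathcal{F}_6},\overline{\mathcal{E}_6}^{(0)})$.

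The key observation — already visible in Table \ref{tab:incremental_N6}\subref{tab:filtrationBar} — is that this index equals $1$, with $w_{1,4,5}\geqslant 0$ being the unique violated inequality. So I need to enlarge $\overline{\mathcal{E}_6}^{(0)}$ by a single ray $\QQp\langle\delta_S\rangle$ with $\delta_S\notin\mathscr{B}_6$ and $w_{1,4,5}(\delta_S)<0$. Inspecting the nine expansions of $\delta_S$ for $S\notin\mathfrak{U}$ in the basis $\mathscr{B}_6$, exactly three candidates ($\delta_{1,2,3}$, $\delta_{1,2,4}$, $\delta_{1,3,5}$) have a negative coefficient on $\delta_{1,4,5}$. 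I would pick one, say $\delta_{1,2,3}$, and form $\overline{\mathcal{E}_6}^{(1)}=\QQp\langle\mathscr{B}_6\rangle+\QQp\langle\delta_{1,2,3}\rangle$.

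Next I would compute the H-representation of $\overline{\mathcal{E}_6}^{(1)}$ using \texttt{polymake}'s \texttt{minkowski\_sum} algorithm, which yields $25$ facet inequalities. To conclude, it suffices to verify $\Gamma(\overline{\mathcal{F}_6},\overline{\mathcal{E}_6}^{(1)})=0$, i.e., to check that each of these $25$ linear forms attains a finite, non-negative minimum on $\overline{\mathcal{F}_6}$. Again this reduces to $25$ linear programs solved via \texttt{lpSolve}. Once confirmed, the chain $\overline{\mathcal{F}_6}\subseteq\overline{\mathcal{E}_6}^{(1)}\subseteq\overline{\mathcal{E}_6}$ gives the claim.

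Given the small ambient dimension $\overline{N}=16$ and the modest number of inequalities involved, no computational bottleneck arises; the main substantive step is the identification of $\mathscr{B}_6$ as the subspace that minimizes the initial index of containment (reducing it to $1$, as recorded in Table \ref{tab:indexOfContainmentOrthant}). In fact, the content of the proposition is already implicit in Proposition \ref{prop:case6formB}, since $\overline{\mathcal{F}_6}=\phi_6(\mathcal{F}_6)\subseteq\phi_6(\mathcal{E}_6)=\overline{\mathcal{E}_6}$ follows immediately from the previously established $\mathcal{F}_6\subseteq\mathcal{E}_6$. The value of the filtration argument above is that it provides a dramatically cheaper certificate — one enlargement step in the $16$-dimensional quotient versus the full $9$-step chain in the $25$-dimensional ambient $\boldsymbol{W}_6$ — and it previews exactly the strategy that makes the $n=7$ case tractable.
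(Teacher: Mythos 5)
Your proposal is correct and follows essentially the same route as the paper: working in $\boldsymbol{U}=\QQ\langle\mathscr{B}_6\rangle$, certifying $\Gamma(\overline{\mathcal{F}_6},\overline{\mathcal{E}_6}^{(0)})=1$ with $w_{1,4,5}\geqslant 0$ the only violated inequality, enlarging by one of $\delta_{1,2,3},\delta_{1,2,4},\delta_{1,3,5}$, and checking by linear programming that the resulting $25$-facet cone $\overline{\mathcal{E}_6}^{(1)}$ already contains $\overline{\mathcal{F}_6}$. Your closing remark that the statement also follows at once from Proposition \ref{prop:case6formB} by applying $\phi_6$ is a valid observation consistent with the paper's Lemma on $\phi_n$-invariance, and does not change the substance of the argument.
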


\begin{table}[!ht]
\begin{center}
\subfloat[][The filtration \eqref{eq:filtrationE} in the case $n=6$.]
{
\label{tab:filtrationN6}
\begin{tikzpicture}[scale=0.875]
\draw (-4,0) -- (-4,-3);
\draw (0,1) -- (0,-3);
\draw (1,1) -- (1,-3);
\draw (2,1) -- (2,-3);
\draw (3,1) -- (3,-3);
\draw (4,1) -- (4,-3);
\draw (5,1) -- (5,-3);
\draw (6,1) -- (6,-3);
\draw (7,1) -- (7,-3);
\draw (8,1) -- (8,-3);
\draw (9,1) -- (9,-3);
\draw (10,1) -- (10,-3);

\draw (0,1) -- (10,1);
\draw (-4,0) -- (10,0);
\draw (-4,-1) -- (10,-1);
\draw (-4,-2) -- (10,-2);
\draw (-4,-3) -- (10,-3);

\node at (-2,-0.5) [] {\small number of facets};
\node at (-2,-1.5) [] {\small $\Gamma\big(\mathcal{F}_6,\mathcal{E}_6^{(i)}\big)$};
\node at (-2,-2.5) [] {\small computing time\footnotemark[3]};

\node at (0.5,0.5) [] {\small $\mathcal{E}_6^{(0)}$};
\node at (1.5,0.5) [] {\small $\mathcal{E}_6^{(1)}$};
\node at (2.5,0.5) [] {\small $\mathcal{E}_6^{(2)}$};
\node at (3.5,0.5) [] {\small $\mathcal{E}_6^{(3)}$};
\node at (4.5,0.5) [] {\small $\mathcal{E}_6^{(4)}$};
\node at (5.5,0.5) [] {\small $\mathcal{E}_6^{(5)}$};
\node at (6.5,0.5) [] {\small $\mathcal{E}_6^{(6)}$};
\node at (7.5,0.5) [] {\small $\mathcal{E}_6^{(7)}$};
\node at (8.5,0.5) [] {\small $\mathcal{E}_6^{(8)}$};
\node at (9.5,0.5) [] {\small $\mathcal{E}_6^{(9)}$};

\node at (0.5,-0.5) [] {\small 25};
\node at (1.5,-0.5) [] {\small 33};
\node at (2.5,-0.5) [] {\small 77};
\node at (3.5,-0.5) [] {\small 109};
\node at (4.5,-0.5) [] {\small 175};
\node at (5.5,-0.5) [] {\small 266};
\node at (6.5,-0.5) [] {\small 341};
\node at (7.5,-0.5) [] {\small 871};
\node at (8.5,-0.5) [] {\small 1420};
\node at (9.5,-0.5) [] {\small 2750};

\node at (0.5,-1.5) [] {\small 25};
\node at (1.5,-1.5) [] {\small 33};
\node at (2.5,-1.5) [] {\small 77};
\node at (3.5,-1.5) [] {\small 109};
\node at (4.5,-1.5) [] {\small 175};
\node at (5.5,-1.5) [] {\small 260};
\node at (6.5,-1.5) [] {\small 326};
\node at (7.5,-1.5) [] {\small 781};
\node at (8.5,-1.5) [] {\small 1033};
\node at (9.5,-1.5) [] {\small 0};

\node at (0.5,-2.5) [] {\small};
\node at (1.5,-2.5) [] {\small 2.5s};
\node at (2.5,-2.5) [] {\small 2.6s};
\node at (3.5,-2.5) [] {\small 2.6s};
\node at (4.5,-2.5) [] {\small 2.7s};
\node at (5.5,-2.5) [] {\small 2.8s};
\node at (6.5,-2.5) [] {\small 2.9s};
\node at (7.5,-2.5) [] {\small 3.3s};
\node at (8.5,-2.5) [] {\small 4.3s};
\node at (9.5,-2.5) [] {\small 6.6s};

\end{tikzpicture}
}

\subfloat[][The filtration \eqref{eq:filtrationBar} in the case $n=6$. The vectors $\{\delta_S \notin \mathscr{B}_6\}$ have been added in the order $\{\delta_{1, 2, 3}, \delta_{1, 2}, \delta_{1, 2, 4}, \delta_{1, 3}, \delta_{1, 3, 5}, \delta_{2, 4}, \delta_{3, 5}, \delta_{4, 6}, \delta_{5, 6}\}$.]
{\label{tab:filtrationBar}
\begin{tikzpicture}[scale=0.875]
\draw (-4,0) -- (-4,-3);
\draw (0,1) -- (0,-3);
\draw (1,1) -- (1,-3);
\draw (2,1) -- (2,-3);
\draw (3,1) -- (3,-3);
\draw (4,1) -- (4,-3);
\draw (5,1) -- (5,-3);
\draw (6,1) -- (6,-3);
\draw (7,1) -- (7,-3);
\draw (8,1) -- (8,-3);
\draw (9,1) -- (9,-3);
\draw (10,1) -- (10,-3);

\draw (0,1) -- (10,1);
\draw (-4,0) -- (10,0);
\draw (-4,-1) -- (10,-1);
\draw (-4,-2) -- (10,-2);
\draw (-4,-3) -- (10,-3);

\node at (-2,-0.5) [] {\small number of facets};
\node at (-2,-1.5) [] {\small $\Gamma\big(\overline{\mathcal{F}_6},\overline{\mathcal{E}_6}^{(i)}\big)$};
\node at (-2,-2.5) [] {\small computing time\footnotemark[3]};

\node at (0.5,0.5) [] {\small $\overline{\mathcal{E}_6}^{(0)}$};
\node at (1.5,0.5) [] {\small $\overline{\mathcal{E}_6}^{(1)}$};
\node at (2.5,0.5) [] {\small $\overline{\mathcal{E}_6}^{(2)}$};
\node at (3.5,0.5) [] {\small $\overline{\mathcal{E}_6}^{(3)}$};
\node at (4.5,0.5) [] {\small $\overline{\mathcal{E}_6}^{(4)}$};
\node at (5.5,0.5) [] {\small $\overline{\mathcal{E}_6}^{(5)}$};
\node at (6.5,0.5) [] {\small $\overline{\mathcal{E}_6}^{(6)}$};
\node at (7.5,0.5) [] {\small $\overline{\mathcal{E}_6}^{(7)}$};
\node at (8.5,0.5) [] {\small $\overline{\mathcal{E}_6}^{(8)}$};
\node at (9.5,0.5) [] {\small $\overline{\mathcal{E}_6}^{(9)}$};

\node at (0.5,-0.5) [] {\small 16};
\node at (1.5,-0.5) [] {\small 25};
\node at (2.5,-0.5) [] {\small 34};
\node at (3.5,-0.5) [] {\small 49};
\node at (4.5,-0.5) [] {\small 108};
\node at (5.5,-0.5) [] {\small 239};
\node at (6.5,-0.5) [] {\small 491};
\node at (7.5,-0.5) [] {\small 869};
\node at (8.5,-0.5) [] {\small 1419};
\node at (9.5,-0.5) [] {\small 2750};

\node at (0.5,-1.5) [] {\small 1};
\node at (1.5,-1.5) [] {\small 0};
\node at (2.5,-1.5) [] {\small 0};
\node at (3.5,-1.5) [] {\small 0};
\node at (4.5,-1.5) [] {\small 0};
\node at (5.5,-1.5) [] {\small 0};
\node at (6.5,-1.5) [] {\small 0};
\node at (7.5,-1.5) [] {\small 0};
\node at (8.5,-1.5) [] {\small 0};
\node at (9.5,-1.5) [] {\small 0};

\node at (0.5,-2.5) [] {\small};
\node at (1.5,-2.5) [] {\small 2.5s};
\node at (2.5,-2.5) [] {\small 2.6s};
\node at (3.5,-2.5) [] {\small 2.6s};
\node at (4.5,-2.5) [] {\small 2.7s};
\node at (5.5,-2.5) [] {\small 2.7s};
\node at (6.5,-2.5) [] {\small 2.8s};
\node at (7.5,-2.5) [] {\small 3.3s};
\node at (8.5,-2.5) [] {\small 4.1s};
\node at (9.5,-2.5) [] {\small 5.9s};

\end{tikzpicture}
}
\caption{\label{tab:incremental_N6} The filtrations of cones $\mathcal{E}_6^{(i)}$ in $\boldsymbol{W}_6$ and $\overline{\mathcal{E}_6}^{(i)}$ in $\boldsymbol{U} = \QQ\langle\mathscr{B}_6\rangle$.}
\end{center}
\end{table}

%\begin{Proposition}[Convex geometry formulation C]\label{prop:case6formC}
%The cone $\overline{\mathcal{F}_6}$ is contained in the cone $\overline{\mathcal{E}_6}$.
%\end{Proposition}

%\pagebreak

%\bibliographystyle{abbrv}
%\bibliography{seven}

\vspace{0.5cm}

\noindent
Claudio Fontanari, Riccardo Ghiloni, Paolo Lella \newline
Dipartimento di Matematica, Universit\`a degli Studi di Trento \newline 
Via Sommarive 14, 38123 Povo Trento (Italy) \newline
{\tt\{\href{mailto:claudio.fontanari@unitn.it}{\tt claudio.fontanari}, \href{mailto:riccardo.ghiloni@unitn.it}{\tt riccardo.ghiloni}, \href{mailto:paolo.lella@unitn.it}{\tt paolo.lella}\}@unitn.it}

\end{document}